\definecolor{vertfonce}{rgb}{0.20, 0.46, 0.25}
\definecolor{rougefonce}{rgb}{0.64, 0.09, 0.20}
\title{On the symplectic geometry of $A_k$ singularities}
\author{Nikolay N. \textsc{Martynchuk}\footnote{University of Groningen, Bernoulli Institute for Mathematics, Computer Science and Artificial Intelligence, P.O. Box 407, 9700 AK Groningen, The Netherlands,
email: n.martynchuk@rug.nl} \and \textsc{V\~{u}
    Ng\d{o}c}
  San\footnote{Univ Rennes, CNRS, IRMAR - UMR 6625, F-35000 Rennes,
    France, email: san.vu-ngoc@univ-rennes1.fr}}
\date{}
\renewcommand{\O}{\mathcal{O}}
\theoremstyle{definition}
\newtheorem{remark}[theo]{Remark}
\begin{document}
\maketitle

{\bf Abstract} This paper presents a complete symplectic
classification of $A_k$ Hamiltonians on $\RM^2$, in the analytic and
smooth categories. Precisely, consider the pair $(H, \omega)$
consisting of a Hamiltonian and a symplectic structure on
$\mathbb R^2$ such that $H$ has an $A_{k-1}$ singularity at the origin
with $k\geq 2$.  We classify such pairs near the origin, up to
fiberwise symplectomorphisms, and up to $H$-preserving
symplectomorphisms. The classification is obtained by bringing the
pair $(H, \omega)$ to a symplectic normal form
$$\big(H = \xi^2 \pm x^k, \ \omega = \textup{d} (f \textup{d} \xi)\big), \quad f =
\sum_{i=1}^{k-1} x^i f_i(x^k),$$ modulo some relations which are
explicitly given. 
 We also
show that the group of $H$-preserving symplectomorphisms of an
$A_{k-1}$ singularity with $k$ odd consists of symplectomorphisms
that can be included into a $C^\infty$-smooth (resp., real-analytic)
$H$-preserving flow, whereas for $k$ even with $k \ge 4$  the same is true modulo the
$\mathbb Z_2$-subgroup generated by the involution $\textup{Inv}(x,\xi) = (-x,-\xi)$.
The paper is concluded with a brief discussion of the conjecture that the symplectic
invariants of $A_{k-1}$ singularities are spectrally determined.

\begin{footnotesize}
  \noindent \textbf{Keywords :} Integrable Hamiltonian systems, $A_k$ singularity, symplectic invariants, inverse spectral problems.\\
  \noindent \textbf{MS Classification :}
  37J39, 
  70H06, 
  37J35, 
  70H15, 
  58J50 
\end{footnotesize}

\section{Introduction}

Consider a one degree of freedom Hamiltonian system $(H, \omega)$ on
$\mathbb R^2$, that is, a smooth function $H$ and a symplectic
structure $\omega$ on $\mathbb R^2$. In what follows and unless stated
otherwise, we shall assume that the Hamiltonian $H$ has an isolated
singularity at the origin of type $A_{k-1}$; this means that in some
(generally non-canonical) local coordinates around the origin, the
Hamiltonian $H$ can be written as
$$
H(x,\xi) = \xi^2 \pm x^k + \textup{const};
$$
in what follows, $\textup{const} = 0.$ See Fig~\ref{fibrations_Ak}.
\begin{figure}[htbp]
  \begin{center}
    \includegraphics[width=0.87\linewidth]{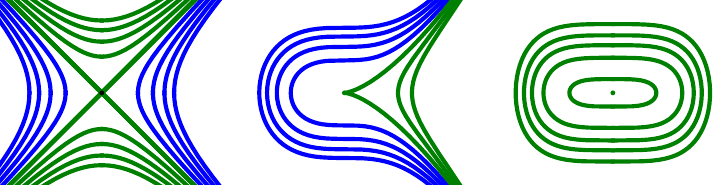}
    \caption{Level sets of $A_{k}, \ k = 1,2,3,$ singularities:
      $H = \xi^2 - x^{2}$ (left), $H = \xi^2 - x^{3}$ (middle),
      $H = \xi^2 + x^{4}$ (right).}
    \label{fibrations_Ak}
  \end{center}
\end{figure}
In this paper, we give a classification of such pairs
$(H = \xi^2 \pm x^k, \omega)$ under local transformations preserving
either the Hamiltonian $H$ or the corresponding fibration $\{H = h\}$,
in the $C^\infty$-smooth and analytic categories. More precisely, the
group $\textup{Diff}(\RM^2,0)$ of local ($C^\infty$-smooth,
respectively analytic) diffeomorphism germs at the origin acts on
pairs $(H,\omega)$ by taking the pull-back:
\[
  (\psi^* H, \psi^* \omega), \quad \psi \in \textup{Diff}(\RM^2,0)\,.
\]
 In the $H$-preserving case, we consider
two pairs $(H_0,\omega_0)$ and $(H_1,\omega_1)$ to be equivalent if
they belong to the same orbit of $\textup{Diff}(\RM^2,0)$, and in the
fibration preserving case we additionally allow transformations on the
left, that is, we consider two pairs $(H_0,\omega_0)$ and
$(H_1,\omega_1)$ to be equivalent if there exists $\tilde H_1$ that
defines the same (singular) foliation as $H_1$ and $(H_0,\omega_0)$
and $(\tilde H_1,\omega_1)$ are in the same orbit of $\textup{Diff}(\RM^2,0)$.

The main interest in this classification comes from the fact that
$A_k, \ k > 1,$ singularities represent one of the simplest examples
of degenerate singularities of integrable Hamiltonian systems and in the case
$k = 2\ell -1, \ \ell > 1,$ (and $H = \xi^2 + x^{2\ell}$), the action
variable and the corresponding integer-affine structure do not suffice
for their symplectic classification. Specifically, this means that
there exist germs of symplectic structures $\omega_0$ and $\omega_1$
near the origin such that the pairs
$ (H = \xi^2 + x^{2\ell}, \omega_0)$ and
$(H = \xi^2 + x^{2\ell}, \omega_1)$ with $\ell \ge 2$ have the same
actions: for all $h > 0$ sufficiently small,
$$
I_0(h) = \int_{\{H \le h\}} \omega_0 = \int_{\{H \le h\}} \omega_1 =
I_1(h),
$$
while there exists no diffeomorphism germ preserving the fibration
$\{H = h\}$ and sending $\omega_1$ to $\omega_0$\footnote{At least in
  the analytic setting, such counterexamples are known; see in
  particular the PhD thesis \cite{Guglielmi2018}.  We will supply a
  proof later in the paper; see
  Section~\ref{sec/classification_smooth}.}. This observation is in
striking contrast with essentially all of the local singularities for
which the symplectic classification is currently known, including the
$A_1$ singularity \cite{DeVerdiere1979}, more general elliptic and
focus-focus singularities \cite{Eliasson1984, Eliasson1990, Miranda2004,
  Vu-Ngoc2013}, and also the $A_2$ singularity~\cite{Bolsinov2018,
  Kudryavtseva2021}, all of which typically occur in 2 or 3 degree of
freedom integrable systems, see for
instance~\cite{vanderMeer1985,CdV2003,Waalkens2004,Bolsinov2004,Dhont2008,Efstathiou2012,Cushman2015,Bolsinov2018_2,Hohloch2021}. This
is also in contrast with most of the known global classifications
provided the singularities are simple and non-splittable
\cite{Delzant1988, Pelayo2009, Pelayo2011, Kudryavtseva2020,
  Kudryavtseva2022}.  See
also the works ~\cite{Duistermaat1980,Zung1996_2,Vu-Ngoc2003,Bolsinov2004,Garay2005,Vu-Ngoc2006,Lukina2008,Izosimov2011b,Cushman2015,CdV2003,Bolsinov2018_2, 
Bolsinov2019,Floch2019,Alonso2020} for
general classification problems of integrable systems.

One particular implication of the insufficiency of the action variable
is that it is currently unknown whether the symplectic invariants of
the Hamiltonian system
$$(H = \xi^2 + x^{2\ell} + O(\|(x,\xi)\|^{2\ell+1}), \textup{d}\xi \wedge \textup{d}x), \ \ell \ge 2,$$
near the origin are spectrally determined: the usual (second order)
Bohr-Sommerfeld rules allow one to reconstruct only the action
variables from the spectrum of the Weyl quantisation $\widehat H$ of $H$;
this suffices in the elliptic case \cite{Vu-Ngoc2011}, but already not
so in the quartic situation (see in particular the discussion in Section~\ref{sec/discussion}).

All this naturally gives rise to the questions of
\begin{itemize}
\item[(i)] what extra symplectic invariants (apart from the action
  variable) exist?
\item[(ii)] what are the symplectic normal forms (up to
  transformations preserving $H$ or, resp., the fibration induced by
  $H$)?
\item[(iii)] does the spectrum of the Weyl quantisation $\widehat H$ still
  contain all the information about the symplectic invariants?
\end{itemize}
In this work, we present a complete solution to the first two of these
questions. In particular, we show that $(H = \xi^2 \pm x^k, \omega)$
can be brought to a normal form
$$\big(H =  \xi^2 \pm x^k, 
\omega = \sum_{i=0}^{k-2} x^i
c_i(H)\big),$$ where $c_i$ is unique up to certain (explicitly given) relations, which
depend on the type of classification ($H$-preserving or fibration-preserving), 
the parity of $k$, and the sign in the Hamiltonian
$H = \xi^2 \pm x^k$ as well as the smoothness class:
$C^\infty$-smooth or real-analytic.
The functions $c_i$ taken up 
to these relations form a complete and minimal set of symplectic invariants,
in the sense that two  pairs  $(H =  \xi^2 \pm x^k, 
\omega_0)$ and $(H =  \xi^2 \pm x^k, 
\omega_1)$ have the same invariants if and only if they are equivalent (in their respective regularity class) up to
\begin{itemize}
\item[a)]
$H$-preserving diffeomorphisms, that is,
are in the same orbit of $\textup{Diff}(\RM^2,0)$, or, respectively, 
\item[b)] fibration-preserving diffeomorphisms, that is, there exists
  a Hamiltonian $\tilde H$ defining the same (singular) foliation as
  $H$ such that $(H, \omega_0)$ and $(\tilde H, \omega_1)$ are in the
  same orbit of $\textup{Diff}(\RM^2,0)$. \end{itemize} We note that
in the complex-analytic category, these results are essentially known
\cite{Francoise1978, Francoise1988}. What was missing is a real
symplectic classification, especially in the $C^\infty$-smooth
category, and the present paper is meant to fill in this gap; it also
presents a new formulation of the classification, see
Theorems~\ref{theo:classical-analytic} and
~\ref{theo:classical-smooth} below, while providing a more explicit
construction of the invariants.

For example, for $k = 4$ and
$H = \xi^2 + x^4$, the germs of $c_{0}$ and $c_2$ and the Taylor
series of $\pm c_{1}$ at the origin (i.e., taken up to sign\footnote{That the Taylor
  series of $c_1$ needs to be taken up to sign comes from the map
  $\textup{Inv}(x,\xi) = (-x,-\xi)$ which is an $H$-preserving
  diffeomorphism that does not change $c_0$ and $c_2$, but changes
  $c_1$.})  classify the symplectic structures up to $H$-preserving
$C^\infty$-smooth diffeomorphisms.  
The germs $c_0 = c_0(H)$ and
$c_2 = c_2(H)$ correspond to the action variable, and in the fibration
preserving case, $c_0$ can be set to one. The Taylor series of $c_1$
(taken up to sign) is an additional symplectic invariant that is not
captured by the action variable.  See
Sections~\ref{sec/classification_analytic} and
\ref{sec/classification_smooth} for more detail.

One application of our results states that for $A_{k-1}$ singularities,
there exists a canonical change of coordinates such that the
Hamiltonian function still splits in the two variables at the cost of
modifying the potential function:

\begin{theo}
  \label{theo:potential}
  Let $H \colon \RM^2 \to \RM$ have an $A_{k-1}$ singularity at the
  origin. Then there exists a smooth (in the real-analytic case,
  real-analytic) local diffeomorphism $\phy:(\RM^2,0)\to(\RM^2,0)$
  preserving the canonical symplectic structure on $\mathbb R^2$ such
  that
  \[
    H\circ \phy = \tilde \xi^2 + V(\tilde x),
  \]
  for some smooth (in the real-analytic case, real-analytic) function
  $V$.
\end{theo}
Interestingly, this shows that the familiar energy from classical
mechanics (kinetic plus potential energy) turns out to be a universal
model for all 1D Hamiltonians near an $A_{k-1}$ singularity.

\begin{proof}[Proof of Theorem~\ref{theo:potential}]
  The proof follows from Theorem~\ref{theo:classical-smooth} below. Indeed, according to this theorem,
  there exist (generally non-canonical) coordinates $(x,\xi)$ on $\mathbb R^2$ near the origin such
  that
  \[
    H = \xi^2 \pm x^{k}, \quad k \ge 2,
  \]
  and the symplectic structure has the form
  $
  \omega = \textup{d}(f \textup{d} \xi)
  $
  for some $C^\infty$-smooth function $f = f(x)$ with $f'(0) \ne 0$. Consider the change of
  variables $\tilde \xi = - \xi, \ \tilde x = f(x)$. Then 
    \[
    H(\tilde x, \tilde \xi) = \tilde \xi^2 + V(\tilde x),
  \]
  where $V = \pm (f^{-1}(\tilde x))^{k},$ and $\omega = \textup{d} \tilde \xi \wedge \textup{d} \tilde x.$
\end{proof}

Finally, we note that the symplectic normal forms constructed in this from are, in fact, explicit and amount to the following 
straightforward procedure:

\begin{itemize}
\item[a)] Finding (generally non-canonical) coordinates $(x, \xi)$ for which the Hamiltonian $H$ of an $A_{k-1}$ singularity
has the form $H = \xi^2 \pm x^k$;

\item[b)] Solving a cohomological equation (as in \cite{Francoise1978, Francoise1988} in the analytic case or by solving a finite number of integral equations;
see Section~\ref{sec/classification_smooth}) and

\item[c)] applying the Moser path method.

\end{itemize}

All of this steps can be achieved by virtue of explicit formulas (see
in particular \cite{Bolsinov2018} for an explicit formula for the
Moser path method in the autonomous case). This may be of interest in
relation to the study of important models from classical mechanics,
see for instance the work~\cite{Francoise2013}, where explicit normal
forms are given near relative equilibria of the Euler top.

The paper is organised as follows. In Section~\ref{sec/moser}, we
reduce the problem of symplectic classification to its infinitesimal
counterpart, by determining which energy-preserving symplectomorphisms
arise via the Moser path method. In the next
Sections~\ref{sec/classification_analytic} and
\ref{sec/classification_smooth}, we classify the $A_{k}$ singularities
up to energy-preserving and fibration-preserving symplectic
transformations.  The paper is concluded with a brief discussion of the
conjecture that all of the symplectic invariants of $A_k$
singularities are spectrally determined.

\section{$H$-preserving isotopies and symplectic
  classification} \label{sec/moser}

Let $\omega_0$ and $\omega_1, \ \omega_1/\omega_0 > 0,$ be two
symplectic forms defined in a neighborhood of the origin $O$ in
$\mathbb R^2$.  Assume that $H$ has an isolated singularity at $O$,
such as the $A_k$ singularity. The question that we will now address
is: when does there exist a local diffeomorphism $\Phi$ such that
$\Phi^{*}\omega_1 = \omega_0$ and $H \circ \Phi = H$? 
(We will use this later also for
symplectic classification up to fibration-preserving diffeomorphisms.)

\subsection{Moser isotopies}

A sufficient condition for the existence of such a diffeomorphism
$\Phi$ is given by the following lemma, which is an adaptation of the
classical Moser's trick, and works both in the $C^\infty$ and analytic
categories. Let $(\phy_t)_{t\in[0,1]}$ be an isotopy of
diffeomorphisms. (In this paper, we always impose
$\phy_0=\textup{Id}$). We say that this isotopy is $H$-preserving when
$\phy_t^* H = H$ for all $t\in [0,1]$.

\begin{lemm} \label{lemm:moser} \textup{(See \cite{Francoise1978}.)}
Suppose that there exists a smooth
  solution $u$ of the cohomological equation
  \begin{equation} \label{eq/Moser_trick} \textup{d}H \wedge
    \textup{d}u = \omega_1 - \omega_0.
  \end{equation}
  Then there exists a local $H$-preserving isotopy of diffeomorphisms
  $\Phi_t$ such that $\Phi_0 = \textup{Id}$ and
  $\Phi_1^{*}\omega_1 = \omega_0$.
\end{lemm}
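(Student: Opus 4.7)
The plan is to apply Moser's path method, the only subtlety being to arrange the primitive so that the resulting flow is tangent to the level sets of $H$. First I would interpolate linearly, $\omega_t = (1-t)\omega_0 + t\omega_1$. The positivity assumption $\omega_1/\omega_0 > 0$ guarantees that $\omega_t$ is nonvanishing at the origin, and hence in some neighborhood of it, for every $t \in [0,1]$; on a surface, nonvanishing is exactly the symplectic condition, so each $\omega_t$ is symplectic on a common neighborhood of the origin.

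Next I would introduce the time-dependent vector field $X_t$ determined by
\[
  \iota_{X_t}\omega_t = u\,\textup{d}H.
\]
This is well-defined because $\omega_t$ is nondegenerate, and $X_t(0) = 0$ since $\textup{d}H(0) = 0$ at the singularity. Consequently the flow $\Phi_t$ of $X_t$ with $\Phi_0 = \textup{Id}$ is defined for the entire interval $t \in [0,1]$ on a fixed neighborhood of the origin. The $H$-preserving character is automatic: contracting $X_t$ into the identically zero 3-form $\textup{d}H \wedge \omega_t$ on $\RM^2$ yields
\[
  (X_t\cdot H)\,\omega_t \;=\; \textup{d}H \wedge \iota_{X_t}\omega_t \;=\; \textup{d}H \wedge (u\,\textup{d}H) \;=\; 0,
\]
so $X_t\cdot H = 0$, and therefore $\Phi_t^* H = H$ for all $t$.

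Finally I would close the argument with the standard Moser computation,
\[
  \frac{\textup{d}}{\textup{d}t}\Phi_t^*\omega_t \;=\; \Phi_t^*\bigl(\mathcal{L}_{X_t}\omega_t + \dot\omega_t\bigr) \;=\; \Phi_t^*\bigl(\textup{d}(u\,\textup{d}H) + \omega_1 - \omega_0\bigr),
\]
using $\textup{d}\omega_t=0$ and Cartan's formula. Since $\textup{d}(u\,\textup{d}H) = \textup{d}u\wedge\textup{d}H = -\,\textup{d}H\wedge\textup{d}u$, the hypothesis \eqref{eq/Moser_trick} gives $\textup{d}(u\,\textup{d}H) = -(\omega_1-\omega_0)$, so the derivative is zero. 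Integrating from $0$ to $1$ produces $\Phi_1^*\omega_1 = \omega_0$, as required; the same argument works in both the $C^\infty$ and the real-analytic setting since $u$ is assumed in the relevant regularity class.

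There is no real obstacle here beyond sign-bookkeeping in the choice of primitive (one must take $\iota_{X_t}\omega_t = u\,\textup{d}H$, not its negative, to cancel $\dot\omega_t$) and the verification that the flow time $t=1$ can be reached in a uniform neighborhood of the origin — both of which are handled by $X_t(0)=0$ and the non-vanishing of $\omega_t$ at the origin.
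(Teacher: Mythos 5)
Your proposal is correct and follows essentially the same route as the paper: the linear interpolation $\omega_t$, the vector field defined by $\iota_{X_t}\omega_t = u\,\textup{d}H$, and the standard Moser computation are identical, and your contraction of $\textup{d}H\wedge\omega_t$ is just a slightly more formal way of phrasing the paper's observation that $\textup{d}H(X_t)=0$. No gaps.
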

\begin{proof}
 The proof follows \cite[Eq.~(5.1) and (5.2)]{Bolsinov2018} (see also \cite{Kudryavtseva2021}) and gives an explicit construction of required isotopy. 
 Let $\ham{H}$ denote the Hamiltonian vector field of $H$
  with respect to $\omega_0$, that is, $i_{\ham{H}} \omega_0 = - \textup{d} H.$ 
The required isotopy $\Phi_t$ can then simply be defined as the inverse of the time $\tau(t,x) = t u(x)$
map of the flow of $\ham{H}$.
Indeed, let $\Psi_t$ be the time $\tau(t,x) = t u(x)$
map of $\ham{H}$, so that $\Phi_t = \Psi_t^{-1}$.
Then
  \[
   \Psi_0 = \textup{id}, \quad \frac{\textup{d}}{\textup{d}t} \Psi_{t}(x) = u(x) \ham{H}(\Psi_{t}(x))
  \]
 and, letting $X_t =  u(\Psi^{-1}_{t}(x)) \ham{H}(x)$,
  \[
    \frac{\textup{d}}{\textup{d}t}\Psi_{t}^{*} \omega_0 = 
  \Psi_{t}^{*}(\textup{d} i_{X_t} \omega_0) = \Psi_{t}^{*}(\dd H \wedge \dd (u \circ \Psi^{-1}_{t})) = \Psi_{t}^{*}((\Psi^{-1}_{t})^{*}\dd H \wedge (\Psi^{-1}_{t})^{*}\dd u) = \omega_1 - \omega_0.
  \]
  Therefore,
  $\Psi_t^*\omega_0 = \omega_0 + t (\omega_1 - \omega_0).$ Setting $t = 1$, we get 
    $\Psi_{1}^* \omega_0 = \omega_1$ and hence 
    $$\Phi_{1}^*\omega_1 = (\Psi_1^{-1})^* \omega_1 =  \omega_0.$$
Note that since $\Psi_t$ is constructed via a multiple of the Hamiltonian vector field $\ham{H}$
  for which
  $\dd H(\ham{H})=0$, $\Psi_t$ and $\Phi_t$ leave $H$ invariant. It
  follows that $\Phi_t$ is as required.
\end{proof}

\begin{remark}
The explicit construction of an isotopy given in the above proof is of interest
in connection to the work \cite{Francoise2013}, where explicit normal forms are given near relative equilibria of the Euler top:
the other steps in the procedure of obtaining a symplectic normal for $A_k$ singularities that we give in this paper are (also) explicit.
\end{remark}

An $H$-preserving isotopy of diffeomorphisms obtained by the above
construction will be called ``of Moser type''. The
requirement~\eqref{eq/Moser_trick} has a natural cohomological
interpretation, as follows. Since in dimension 2, $\Omega^2=Z^2$
(\emph{i.e.} all $2$-forms are closed), we have by the Poincar{\'e}
lemma the following short exact sequence (on $\RM^2$ or any ball
around the origin)
\[\xymatrix{ \mathbb R \ar[r]& \Omega^0 \ar[r]^{\dd{}}& \Omega^1
    \ar[r]^{\dd{}}& Z^2 \ar[r]& 0}\,,
\]
giving the isomorphism $\delta:\frac{\Omega^1}{\dd{\Omega^0}}\to
Z^2$. The Moser method can be applied if we find a smooth $u$ such
that $\omega_1-\omega_0 = \dd H \wedge \dd u$, which is equivalent via
$\delta^{-1}$ to $\alpha_0 - \alpha_1 = u \dd H + \dd f$, for some
smooth function $f$, where $\alpha_j$ is a primitive of $\omega_j$; in
other words, if
$\delta^{-1}(\omega_1-\omega_0) \in \Omega^0 \dd H +
\dd{\Omega_0}$. Therefore, we naturally introduce the ``deformation
space''
\[
  \mathcal{H}_{H} := \frac{\Omega^1}{\Omega^0 \dd H + \dd{\Omega_0}}
  \simeq \frac{Z^2}{\dd{\Omega^0}\wedge \dd H}
\]
(sometimes called the vanishing cohomology, see~\cite{CdV2003}), and
denote by $[\omega_1-\omega_0]_H$ the class of
$\delta^{-1}(\omega_1-\omega_0)$ in $\mathcal{H} _H$.
Condition~\eqref{eq/Moser_trick} is $[\omega_1-\omega_0]_H = 0$. It
turns out that, in the case of an $A_k$ singularity, there is a
converse statement.
\begin{prop} \label{prop/character_moser_type} Let $\omega_0$,
  $\omega_1$ be symplectic forms near $0\in\RM^2$. Let
  $H(x,\xi) = \xi^2 \pm x^k$, $k\geq 2$. Then, there exists an
  $H$-preserving isotopy of diffeomorphisms $\Psi_t$ such that
  $\Psi_0=\textup{Id}$ and $\Psi_1^*\omega_1 = \omega_0$ if and only
  if $[\omega_1-\omega_0]_H = 0$. In this case, there exists an
  isotopy of Moser type with the same property.
\end{prop}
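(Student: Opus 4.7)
The sufficiency direction follows directly from Lemma~\ref{lemm:moser}: given $u$ with $dH \wedge du = \omega_1 - \omega_0$, the lemma produces a Moser-type isotopy. My plan for the converse is to reverse the Moser construction and read off a cohomological equation.

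Assume an $H$-preserving isotopy $\Psi_t$ with $\Psi_0 = \textup{Id}$ and $\Psi_1^*\omega_1 = \omega_0$, and set $\sigma_t = \Psi_t^*\omega_1$ and $Y_t = (\partial_t \Psi_t)\circ \Psi_t^{-1}$. Differentiating $\Psi_t^* H = H$ gives $dH(Y_t) = 0$, so $Y_t$ is tangent to the level curves of $H$. Cartan's formula together with $d\omega_1 = 0$ yields $\tfrac{d}{dt}\sigma_t = d(\Psi_t^*\beta_t)$ where $\beta_t := i_{Y_t}\omega_1$; integrating gives $\omega_0 - \omega_1 = d\gamma$ with $\gamma := \int_0^1 \Psi_t^*\beta_t\, dt$. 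The two-dimensional identity $(i_{Y_t}\omega_1) \wedge dH = -dH(Y_t)\,\omega_1 = 0$, combined with $\Psi_t^* dH = dH$, forces $\Psi_t^*\beta_t \wedge dH = 0$ for each $t$, and hence $\gamma \wedge dH = 0$.

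The crux is then a division statement: for $H = \xi^2 \pm x^k$ with $k \geq 2$, every germ of a smooth (resp.\ real-analytic) 1-form $\gamma$ at the origin with $\gamma \wedge dH = 0$ is of the form $\gamma = u\,dH$ for a smooth (resp.\ analytic) function germ $u$. Writing $\gamma = a\,dx + b\,d\xi$, the relation $\gamma \wedge dH = 0$ becomes $2\xi a = \pm k x^{k-1} b$. Differentiating $j$ times in $x$ and setting $x = 0$ for $j = 0, \dots, k-2$ shows that every term from $\partial_x^j(x^{k-1} b)$ still vanishes at $x = 0$, so that $\partial_x^j a(0,\xi) \equiv 0$ for these $j$. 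Hadamard's lemma then gives $a = x^{k-1} c$ with $c$ smooth (in the analytic case, the same conclusion follows from coprimality of $\xi$ and $x^{k-1}$ in the ring of convergent power series); symmetrically $b = \xi d$, and substituting back forces $2c = \pm k d$. A direct check then shows $\gamma = (\pm c/k)\,dH$.

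Applying this to our $\gamma$ yields a smooth $u$ with $d\gamma = du \wedge dH$, so that $\omega_1 - \omega_0 = dH \wedge du$ and $[\omega_1 - \omega_0]_H = 0$. Feeding this $u$ back into Lemma~\ref{lemm:moser} supplies the Moser-type isotopy, settling both parts of the proposition at once. The main obstacle is the division step, which must work smoothly through the degenerate critical point of $H$; the specific monomial form $H = \xi^2 \pm x^k$ is precisely what lets one verify the necessary vanishing of Taylor coefficients by induction and then apply Hadamard's lemma, so the proof is expected to degrade if $H$ is replaced by a more general Hamiltonian with the same singularity type without first bringing it to this normal form.
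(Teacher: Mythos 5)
Your proof is correct; the sufficiency direction is handled exactly as in the paper via Lemma~\ref{lemm:moser}, and your converse reaches the same cohomological equation by a route that is dual to the paper's. The paper argues pointwise in $t$: setting $\omega_t=(\Psi_t^{-1})^*\omega_0$, it notes that the isotopy's vector field $X_t$ satisfies $\dd H(X_t)=0$, hence is collinear with the Hamiltonian field $\ham{H}^t$ of $H$ for $\omega_t$, writes $X_t=u_t\,\ham{H}^t$, proves smoothness of each $u_t$ by a divisibility argument on the components $(-2\xi,\pm kx^{k-1})$, obtains $\dd H\wedge \dd u_t=\partial_t\omega_t$, and only integrates in $t$ at the end. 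You instead integrate first, producing a single primitive $\gamma=\int_0^1\Psi_t^*(i_{Y_t}\omega_1)\,\dd t$ of $\omega_0-\omega_1$ satisfying $\gamma\wedge\dd H=0$, and then perform one division: any germ of a $1$-form annihilated by wedging with $\dd H$ equals $u\,\dd H$ with $u$ smooth (resp.\ analytic), because the components $2\xi$ and $kx^{k-1}$ of $\dd H$ are coprime. Both arguments hinge on exactly this coprimality/division phenomenon at the isolated singularity, but your order of operations avoids the time-dependent family of symplectic forms and divisions, and your Hadamard-lemma verification (checking $\partial_x^j a(0,\xi)=0$ for $j\le k-2$ and $b(x,0)=0$) is spelled out more completely than the paper's rather terse Taylor-series argument for the smoothness of $u_t$. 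Your closing observation is also accurate: both proofs use the monomial normal form of $H$ (equivalently, that $\dd H$ has coprime components vanishing only at the origin) at precisely this division step, and neither would survive for a general representative of the singularity class without first normalizing $H$.
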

\begin{proof}
  The last sentence is merely Lemma~\ref{lemm:moser}; hence we need to
  prove that the existence of the isotopy $\Psi_t$ (not necessarily of
  Moser type) implies the cohomological equation
  $[\omega_1-\omega_0]_H = 0$. Let
  $\omega_t = (\Psi_t^{-1})^*\omega_0$ and let $\ham{H}^t$ be minus
  the Hamiltonian vector field of $H$ for the $\omega_t$ symplectic
  form, \emph{i.e.}
  \begin{equation}
    i_{\ham{H}^t}\omega_t = \dd H\,.
    \label{equ:hamHt}
  \end{equation}
  Let $X_t$ be the vector field of the isotopy, that is
  \[
    X_t(\Psi_t) = \frac{\dd{\Psi_t}}{\dd{t}}\,.
  \]
  Since $\Psi_t$ preserves $H$ we must have $\dd H(X_t) = 0$, which
  from~\eqref{equ:hamHt} gives $\omega_t(\ham{H}^t,X_t)=0$. Hence
  $X_t$ is symplectically orthogonal to $\ham{H}^t$ which implies, by
  dimension consideration, that these vector fields must be collinear:
  there is a function $u_t$ such that
  $$X_t = u_t \ham{H}^t.$$ Let us
  prove that $u_t$ must be smooth.  Indeed, we have that $X_t$ is
  smooth as a function of $(x,\xi)$ for every $t$.  Let
  $\omega_t = g(x,\xi,t) \dd \xi \wedge \dd x.$ Then
  $\ham{H}^t = (-2\xi, \pm k x^{k-1})/g$. Hence
  $(-2\xi u_t, \pm k x^{k-1} u_t) = X_tg$ and therefore also
  $(\xi u_t, x^{k-1} u_t)$ are smooth vector fields. Suppose $u_t$ is
  not smooth. Then the Taylor expansion of $\xi u_t$ at the origin (we
  only need to check smoothness at the origin) contains a non-zero
  term of the form $c x^m$. But then the Taylor expansion of
  $x^{k-1} u_t = (x^{k-1}/ \xi) \xi u_t$ contains a non-zero term
  $c x^{m+k-1}/ \xi,$ which implies $c = 0$.  This is a contradiction
  showing $u_t$ is smooth as a function of $(x,\xi)$ for every $t$.

  Since $\Psi_t^* \omega_t = \omega_0$, we have
  \begin{align}
    \label{eq:3}
    0 = \frac{\dd{}}{\dd t}\Psi_t^* \omega_t
    &= \Psi_t^* \left( \lie_{X_t}\omega_t + \frac{\dd{\omega_t}}{\dd t}\right)\\
    &= \Psi_t^*\left( \dd (i_{X_t}\omega_t) +  \frac{\dd{\omega_t}}{\dd t}\right)\\
    &= \Psi_t^*\left(\dd u_t \wedge \dd H + \frac{\dd{\omega_t}}{\dd t}\right)
  \end{align}
  and hence
  \[
    \dd H \wedge \dd u_t = \frac{\dd{\omega_t}}{\dd t}.
  \]
  Integrating over $[0,1]$, we see that the function
  $u:=\int_0^1 u_t \dd t$ satisfies our cohomological equation
  \[
    \dd H \wedge \dd u = \omega_1 - \omega_0\,, \quad \text{
      \emph{i.e.} } \quad [\omega_1-\omega_0]_H = 0\,.
  \]
  The result follows (since by Lemma~\ref{lemm:moser}, there is an
  $H$-preserving isotopy $\Phi_t$ of Moser type such that
  $\Phi_1^* \omega_1 = \omega_0$).
\end{proof}

\subsection{$H$-preserving diffeomorphisms vs $H$-preserving
  isotopies}

Let $H(x,\xi) = \xi^2 \pm x^k$, with $k\geq 3$.  Before we proceed to
the symplectic classification, we need to know which $H$-preserving
diffeomorphisms of an $A_k$ singularity arise as $H$-preserving
isotopies. The following result is in contrast with the usual Morse
case $k=2$.

\begin{lemm}\label{lemm:A}
  Let $H(x,\xi) = \xi^2 \pm x^k$, with $k\geq 3,$ and $\phy$ be a
  $C^\infty$-smooth (respectively, real-analytic) diffeomorphism
  fixing the origin and such that $\phy^* H = H$.  Then there exists
  $\epsilon_1=\pm 1$, $\epsilon_2=\pm 1$ (with $\epsilon_1^k=1$), and
  $b\in\RM$ such that the linear part of $\phy,$ in coordinates
  $(x, \xi)$, is
  \[
    \dd \phy(0) =
    \begin{pmatrix}
      \epsilon_1&b\\0&\epsilon_2
    \end{pmatrix} .
  \]
  Let $A:=\begin{pmatrix} \epsilon_1&0\\0&\epsilon_2
  \end{pmatrix}$. Then, the map $\tilde \phy := \phy\circ A$ preserves
  $H$ and is homotopic to the identity through a $C^\infty$-smooth (in
  the real-analytic case, real-analytic) $H$-preserving isotopy.
\end{lemm}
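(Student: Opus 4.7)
The argument splits into a computation of the linear part $d\phi(0)$ and the construction of an $H$-preserving isotopy from $\tilde\phi$ back to the identity.

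\emph{(i) The linear part.} Write $\phi=(\phi_1,\phi_2)$ with $d\phi(0)=\begin{pmatrix} a & b \\ c & d \end{pmatrix}$. The identity $\phi^{*}H=H$ reads $\phi_2^2\pm\phi_1^k=\xi^2\pm x^k$. Its degree-$2$ part is $(cx+d\xi)^2=\xi^2$ (since $\pm\phi_1^k$ contributes only from degree $k\geq 3$), forcing $c=0$ and $d^2=1$; set $\epsilon_2:=d$. A short induction on $j=2,\dots,k-2$ then shows that the homogeneous component $[\phi_2]_j$ vanishes: at degree $j+1<k$, after using the inductive vanishings, the relation reduces to $2\epsilon_2\,\xi\cdot[\phi_2]_j=0$. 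Restricting the degree-$k$ identity to $\{\xi=0\}$ then yields $\pm a^k x^k=\pm x^k$, hence $a^k=1$; writing $a=\epsilon_1$ gives the announced form of $d\phi(0)$.

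\emph{(ii) Compensating the signs and the shear via the Hamiltonian flow.} A direct check using $\epsilon_1^k=1$ and $\epsilon_2^2=1$ yields $A^{*}H=H$, whence $\tilde\phi=\phi\circ A$ preserves $H$ as well and has linearization $\begin{pmatrix} 1 & b\epsilon_2 \\ 0 & 1 \end{pmatrix}$. Equip $\RM^2$ with the canonical symplectic form $\omega_0=\textup{d}x\wedge\textup{d}\xi$, and let $X_H=2\xi\partial_x\mp kx^{k-1}\partial_\xi$ be the corresponding Hamiltonian vector field. Since $k\geq 3$, $X_H$ vanishes at the origin with Jacobian $\begin{pmatrix} 0 & 2 \\ 0 & 0 \end{pmatrix}$; hence $\Phi^{X_H}_s$ preserves $H$ and has linearization $\begin{pmatrix} 1 & 2s \\ 0 & 1 \end{pmatrix}$. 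The path $s\in[0,1]\mapsto\tilde\phi\circ\Phi^{X_H}_{-sb\epsilon_2/2}$ is therefore an $H$-preserving isotopy from $\tilde\phi$ to a diffeomorphism $\tilde\phi^{\flat}$ with $d\tilde\phi^{\flat}(0)=\textup{Id}$.

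\emph{(iii) Main obstacle: the higher-order part.} It remains to join $\tilde\phi^{\flat}$ to the identity through an $H$-preserving isotopy. The key algebraic observation is that every smooth (resp.\ analytic) vector field $X=p\partial_x+q\partial_\xi$ with $X(H)=0$ has the form $uX_H$: the relation $\pm kx^{k-1}p+2\xi q=0$ forces $p=\xi\tilde p$ and $q=\mp\tfrac{k}{2}x^{k-1}\tilde p$, so that $u=\tilde p/2$. Consequently, any $H$-preserving isotopy is generated by a time-dependent field $u_tX_H$, and the task becomes to realise $\tilde\phi^{\flat}$ as the time-$1$ map of such a flow. One constructs $u_t$ order by order in its Taylor expansion at $0$, using that $d\tilde\phi^{\flat}(0)=\textup{Id}$; in the real-analytic category, convergence of the resulting series is handled as in~\cite{Francoise1978, Francoise1988}, whereas in the $C^\infty$ category one Borel-sums the formal data and absorbs the remaining flat discrepancy by an additional Moser-type isotopy as in Proposition~\ref{prop/character_moser_type}, whose cohomological obstruction in $\mathcal{H}_H$ vanishes because the residual $2$-form is flat at the origin. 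The main technical difficulty is the uniform control of this flat remainder on a fixed neighbourhood of $0$.
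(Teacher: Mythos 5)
Your step \emph{(i)} is correct and is essentially the paper's computation of $\dd\phy(0)$ (the inductive vanishing of $[\phy_2]_j$ for $2\le j\le k-2$ is the paper's statement $\phy_\xi=d\xi+\O(k-1)$), and step \emph{(ii)} is a legitimate, slightly different way to dispose of the shear $b$ via the flow of $X_H$ (the paper never needs this, see below). The problem is step \emph{(iii)}, which is where the actual content of the lemma lies and which you have only sketched. Two concrete issues. First, the order-by-order realisation of $\tilde\phy^{\flat}$ as a time-one map of a flow $u_tX_H$, together with convergence in the analytic category, is precisely the hard part; citing Fran\c{c}oise does not discharge it, since those references treat the cohomological equation $\{H,u\}=g$ for functions/forms, not the (nonlinear, group-level) problem of writing a prescribed diffeomorphism as a flow. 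Second, and more seriously, your final step does not typecheck: Proposition~\ref{prop/character_moser_type} and the Moser trick produce an isotopy carrying one \emph{symplectic form} to another with no control on which diffeomorphism is obtained, whereas what you are left with after Borel summation is a fixed $H$-preserving \emph{diffeomorphism} that is flat-to-identity at the origin, with no $2$-form in sight (the lemma assumes no symplectic structure is preserved). Connecting such a flat-to-identity $H$-preserving germ to the identity is itself a nontrivial statement (it is close to the content of Theorem~\ref{theo:symplecto}, whose proof needs the Darboux--Carath\'eodory-type coordinates away from the singular fibre), and you have not supplied it.

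The paper avoids all of this with a one-line construction that your proposal misses: since $H$ is quasi-homogeneous for the weights $(2,k)$, i.e.\ $H\circ h_t=t^{2k}H$ with $h_t(x,\xi)=(t^2x,t^k\xi)$, the rescaled maps
\[
  \phy_t(x,\xi)=\Bigl(\tfrac{1}{t^2}\phy_x(h_t(x,\xi)),\ \tfrac{1}{t^{k}}\phy_\xi(h_t(x,\xi))\Bigr)
\]
satisfy $\phy_t^*H=H$ for every $t$, are smooth (resp.\ analytic) down to $t=0$ precisely because of the Taylor structure established in step \emph{(i)} (in particular $\phy_\xi=d\xi+\O(k-1)$), and converge to the linear map $(ax,d\xi)$ as $t\to 0$; composing with $A$ gives the desired isotopy to the identity in one stroke. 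Note that this also kills the shear automatically, since $b$ enters as $bt^{k-2}\xi\to 0$ for $k\ge 3$, so your step \emph{(ii)} becomes unnecessary. As written, your argument has a genuine gap at step \emph{(iii)} and does not prove the lemma.
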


\begin{proof}
  We denote $\phy(x,\xi) = (\phy_x(x,\xi), \, \phy_\xi(x,\xi))$, and
  $\dd \phy(0)=:
  \begin{pmatrix}
    a&b\\c&d
  \end{pmatrix}$. We denote by $\O(k)$ the ideal of smooth functions
  on $\RM^2$ whose Taylor series at the origin starts with a
  homogeneous polynomial in $(x,\xi)$ of degree $\geq k$. Writing
  \begin{equation}
    \phy_\xi^2 \pm \phy_x^k = \xi^2 \pm x^k,
    \label{equ:preserveH}
  \end{equation}
  and using that $\phy_x^k \in \O(k)$ we get
  \begin{equation}
    \phy_\xi^2 = \xi^2 + \O(k).
    \label{equ:phy_xi2}
  \end{equation}
  Since $\phy_\xi(x,\xi)^2 = (cx + d\xi)^2 + \O(3)$ and $k\geq 3$, we
  obtain, identifying coefficients, that $c=0$ and $d^2=1$. Hence
  $\phy_\xi(x,\xi) = d \xi + \O(2)$, $d=\pm 1$.  The full Taylor
  expansion of $\phy_\xi$ is
  \[
    \phy_\xi(x,\xi) = d \xi + A_2 + A_3 + \cdots
  \]
  where $A_j$ is homogeneous of degree $j$. Let $j_0\geq 2$ be the
  smallest index for which $A_{j_0}\neq 0$. Then the term of smallest
  degree in $\phy_\xi^2 - \xi^2$ is $2d\xi A_{j_0}$, of degree
  $j_0+1$. From~\eqref{equ:phy_xi2} we deduce that $j_0+1 \geq k$.
  Hence we have
  \begin{equation}
    \label{equ:phy_xi}
    \phy_\xi(x,\xi) = d \xi + \O(k-1), \quad d = \pm 1,
  \end{equation}
  and $\phy_\xi^2 = \xi^2 + 2d \xi A_{k-1} + \O(k+1) $. Writing now
  \begin{equation}
    \phy_x(x,\xi) = ax + b\xi + S_2(x,\xi),
    \label{equ:phy_x}
  \end{equation}
  where $S_2\in \O(2)$, we have
  \[
    \phy_x(x,\xi)^k = (ax + b\xi)^k + \O(k+1) = (ax)^k + \xi D_{k-1} +
    \O(k+1)
  \]
  where $D_{k-1}$ is homogeneous of degree
  $k-1$. From~\eqref{equ:preserveH}, we obtain
  \[
    \pm (ax)^k \pm \xi D_{k-1} + \xi^2 + 2d \xi A_{k-1} = \pm x^k +
    \xi^2 + \O(k+1),
  \]
  which implies $a^k=1$, and $\pm D_{k-1} + 2d A_{k-1} = 0$.

  For each $t>0$, consider the maps $h_t(x,\xi) = (t^2x, t^{k}\xi)$,
  and
  \begin{equation} \label{isotopy} \phy_t(x,\xi) =
    \left(\frac{1}{t^2}\phy_x(h_t(x,\xi)), \,
      \frac{1}{t^{k}}\phy_\xi(h_t(x,\xi))\right)\,.
  \end{equation}
  
  Observe that, from~\eqref{equ:phy_xi} and~\eqref{equ:phy_x},
  \[
    \phy_t(x,\xi) = \left(ax + bt^{k-2}\xi + \frac{1}{t^2}
      S_2(t^2x,t^{k}\xi), \, d \xi + \frac{1}{t^{k}} B_{k-1}(t^2x,
      t^{k}\xi)\right)\,,
  \]
  where $B_{k-1}\in\mathcal{O}(k-1)$.  It follows that $\phy_t$ is
  smooth with respect to all variables (including the case $t = 0$)
  and converges to the linear map $\phy_0(x,\xi) = (ax, d \xi)$ as
  $t \to 0$.

  To conclude, we let $\epsilon_1:= a$, $\epsilon_2 := d$, and we see
  that $\phy_t\circ A$ is the required isotopy. Indeed, we have
  already shown that $\phy_t$ and hence $\phy_t\circ A$ is
  smooth. Furthermore, $\phy_0\circ A$ is the identity. Finally,
  notice that
  \begin{align}
    \phy_t^* H
    &=       \frac{1}{t^{2k}}\phy_\xi^2\circ h_t \pm 
      \frac{1}{t^{2k}}\phy_x^k\circ h_t  \\
    &= \frac{1}{t^{2k}} (\phy^*H)\circ h_t\\
    &= \frac{1}{t^{2k}} H\circ h_t = H\,.
  \end{align}
  This shows that the isotopy preserves $H$.
\end{proof}

\begin{remark}
  In case when $\varphi$ preserves the `canonical' symplectic
  structure $\dd \xi \wedge \dd x$, then so does the isotopy
  \eqref{isotopy}:
  $\varphi_t^*(\dd \xi \wedge \dd x) = \dd \xi \wedge \dd x$ for all
  $t \in [0,1]$.
\end{remark}

The following theorem will not be directly used in what follows, but
can be proven by the same method.
\begin{theo}
  \label{theo:symplecto}
  Let $\omega$ be a symplectic structure around the origin in
  $\mathbb R^2$ and $\varphi$ be a $C^\infty$-smooth (respectively,
  real-analytic) diffeomorphism fixing the origin such that
  $\varphi^*\omega = \omega$ and $H \circ \varphi = H$, where
  $H = \xi^2 \pm x^k, \ k > 2.$ Moreover, let
  $\textup{Inv} \colon \mathbb R^2 \to \mathbb R^2$ be defined by
  $\textup{Inv} (x,\xi) = (-x, -\xi).$ If $k = 2\ell +1$ is odd, then
  $\varphi$ must be a time-one map of the flow of a function of
  $H$. If $k = 2\ell$ is even either $\varphi$ or
  $\varphi \circ \textup{Inv}$ is a time-one map of the flow of a
  function that is constant on the connected components of $H.$
\end{theo}
\begin{remark}
  In the case when $H = \xi^2 + x^{2\ell}$, then $\varphi$ or
  $\varphi \circ \textup{Inv}$ is a time-one map of the flow of a
  \textit{function} of $H,$ since the fibers $H^{-1}(h)$ are connected
  in this case. In the real-analytic case, $\varphi$ or
  $\varphi \circ \textup{Inv}$ is a time-one map of the flow of a
  function of $H$ also when $H = \xi^2 - x^{2\ell}.$
\end{remark}
\begin{proof}[Proof of Theorem~\ref{theo:symplecto}]
  By Lemma~\ref{lemm:A}, we can without loss of generality assume that
  $\varphi$ is isotopic to the identity via a $C^\infty$-smooth
  (respectively, real-analytic) $H$-preserving isotopy.  Let $\phy_t$
  be such an isotopy and $X_t$ its vector field, given by
  \[
    X_t(\phy_t) = \frac{\dd{\phy_t}}{\dd{t}}\,.
  \]
  As in the proof of Proposition~\ref{prop/character_moser_type}, we
  have that $X_t = u_t \ham{H}$ for a $C^\infty$-smooth (in the
  real-analytic case, real-analytic) function $u_t$, where $\ham{H}$
  is minus the Hamiltonian vector field of $H.$ Moreover, the function
  $u:=\int_0^1 u_t(\varphi_t)$ satisfies the cohomological equation
  $ \dd H \wedge \dd u = 0\,$ since $\varphi = \varphi_1$ preserves
  $\omega$. In particular, $u$ is constant on the connected components
  of $H$.  Then any smooth function $g$ such that $\dd g = u \dd H$ is
  as required.  Indeed, let $B_\varepsilon \subset \mathbb R^2$ be a
  small open ball around the origin. Then on each connected component
  of $B_\varepsilon \setminus \{H = 0\}$ we can write
$$
\omega = \textup{d} H \wedge \textup{d} V
$$
for some variable $V$, which is multivalued when
$H = \xi^2+ x^{2\ell}$ and single-valued otherwise. (This is a simple
version of the Darboux-Carath{\'e}odory theorem, which in this case
directly follows from $\dd H \ne 0$ outside $H = 0$.)  Observe that in
the $(V, H)$ coordinates,
$$
\frac{\dd{\phy_t}}{\dd{t}} = u_t \ham{H}|_{\varphi_t} =
\left(u_t(\varphi_t), 0\right)
$$
and hence
$$
\varphi(1) - \varphi(0) = \int_0^1 \left(u_t(\varphi_t), 0\right)
\textup{d}t = (u, 0).
$$
But the same is true for the time-one map of the flow of
$\ham{g}$. The result follows.
\end{proof}

\subsection{Symplectic classification} \label{subsec/classification}

The preceding discussion shows that in order to reduce the pair
$(H,\omega_0)$ to a new, hopefully simpler pair $(H,\omega_1)$, it
essentially suffices to apply Lemma~\ref{lemm:moser}, \emph{i.e.}
find a smooth function $u$ solving the cohomological equation
\eqref{eq/Moser_trick}.  Indeed, for any $H$-preserving
symplectomorphism $\varphi$ with
$\varphi^* \omega_1 = \omega_0, \, \omega_1/\omega_0 > 0,$ it holds
that either $\varphi$ or $\varphi \circ \textup{Inv},$ where
$\textup{Inv}(x,\xi) = (-x,-\xi),$ arises from an $H$-preserving
isotopy of Moser type; see Proposition~\ref{prop/character_moser_type}
and Lemma~\ref{lemm:A}.

Observe that Eq.~\eqref{eq/Moser_trick} is simply a linear PDE (a
transport equation) on the unknown function $u$, for it can be
rewritten as
\begin{equation} \label{eq/PDE} \partial_x u \cdot \partial_\xi H -
  \partial_\xi u \cdot \partial_x H= g(x, \xi),
\end{equation}
where
$$
\omega_1 - \omega_0 = g(x, \xi) \textup{d}\xi \wedge \textup{d}x.
$$
We note that Eq.~\eqref{eq/PDE} can always be formally solved, as
shown below, but not always can one find a smooth (or even an
everywhere well-defined) solution. Indeed, let us look for a solution
of the form $u = \tilde u(\xi,H(x,\xi))$. We have
$\dd H \wedge \dd u = -\partial_\xi\tilde u \partial_x H \dd \xi
\wedge \dd x$. Hence, let $\varepsilon = \varepsilon(x,\xi)$ be a
function that is constant on the connected components of the level
sets of $H$\footnote{We will take $\varepsilon = \textup{const}$ when
  $H = \xi^2 \pm x^{2\ell + 1}$ or $H = \xi^2 - x^{2\ell}$ and
  $\varepsilon = \sqrt{h}$ when $H = \xi^2 + x^{2\ell} = h.$}. Then
the following formula, coming from the initial condition
$\tilde u |_{x = -\varepsilon(x,\xi)}=0$,
\begin{equation}
  \label{eq/solution_cohomolog} \tilde{u}(\xi,h) =
  \int_{-\varepsilon(x,\xi)}^{\xi} \dfrac{g(\beta(t,h),t)}{-\partial_
    x H (\beta(t,h),t)} \textup{d}t, \quad \text{ where } \ \
  H(\beta(t,h),t) = t^2 \pm \beta^k(t,h) = h,
\end{equation}
gives rise to a solution $u = \tilde{u}(\xi,H(x,\xi))$ of
Eq.~\eqref{eq/PDE}; the general solution is then
$\tilde{u}(\xi,H(x,\xi)) + a(x,\xi)$, where $a = a(x,\xi)$ is constant
on the connected components of the levels sets of $H$.

We will use these observations in the following way. We start with
local coordinates $(x,\xi)$ around the origin in $\mathbb R^2$ in
which the Hamiltonian
$$H(x,\xi) = \xi^2 \pm x^k,$$
but the symplectic structure
$\omega_0 = g_0(x,\xi) \textup{d}\xi \wedge \textup{d}x$ is arbitrary.
Then we will bring the symplectic structure to the form
$\omega_0 = \omega_{norm} + \textup{d}H \wedge \textup{d}u$, where
$\omega_{norm}$ is in normal form giving (possibly modulo the action
of the involution $\textup{Inv}(x,\xi) = (-x,-\xi)$) the desired
symplectic invariant of the $H$-preserving symplectic equivalence.
Finally, the normal form $\omega_{norm}$ will be simplified further by
allowing `transformations on the left', resulting in the normal form
of the fibration $H \colon \mathbb R^2 \to \mathbb R$ near the origin.

The idea behind the procedure goes back to \cite{DeVerdiere1979}; see
also \cite{Francoise1978, Francoise1988, Francoise1990} and the more recent works
\cite{Bolsinov2018} and \cite{Kudryavtseva2021} very close to this
point of view. The procedure is general and can in theory be applied
to any Hamiltonian function.  In this paper, we restrict our attention
to Hamiltonians having an isolated singularity of the type $A_k$,
which is a simplest (possibly) degenerate singularity in one
dimension.

Note that so far the discussion in the smooth and real-analytic
categories has been completely parallel. The precise results and the
proofs however will be somewhat different in these two cases. In what
follows, the two cases are hence considered separately.

\section{Symplectic classification in the real-analytic
  case} \label{sec/classification_analytic}

Let $H$ be a real-analytic Hamiltonian on $(\RM^2, \omega)$
with an $A_{k-1}, k \ge 2,$ singularity at the origin $O$. The following result gives a local
symplectic classification of $(H, \omega)$ near $O$ up to $H$-preserving real-analytic diffeomorphism germs, as defined in the Introduction.

\begin{theo}
  \label{theo:classical-analytic}
Let $H$ be a real-analytic Hamiltonian on $(\RM^2, \omega)$
having $A_{k-1}, k \ge 2,$ singularity at the origin $O$.
  Then the pair $(H, \omega)$ has the following local symplectic normal form near $O$ for the group of
  $H$-preserving real-analytic diffeomorphism germs:
  \begin{equation}
    \big(H = \xi^2 \pm x^{k}, \ \omega = \textup{d}(f \textup{d} \xi)\big), \quad \mbox{ where } f =
    \sum_{i=1}^{k-1} x^i f_i(x^k),\label{equ:normal-form_analytic}
  \end{equation}
  for some real-analytic functions $f_i$ with $f_1(0) \ne
  0$ defined uniquely modulo the following relation: if $k = 2\ell$ is even, then  $f$ is
    uniquely defined up to changing the sign of
    $\sum_{i=1}^{\ell-1} x^{2i} f_{2i}(x^{2\ell})$. In other words,
      \begin{itemize}
  \item if $k$ is odd, then (the Taylor series of) $f_{i}, i = 1, \ldots, k -1,$ form a complete
  and minimal set of symplectic invariants;
  \item if $k = 2\ell$ is even, then (the Taylor series of) $f_{i}, i = 1, \ldots, k -1,$ form a complete
    set of symplectic invariants with the only relation given by changing the sign of
    $\sum_{i=1}^{\ell-1} x^{2i} f_{2i}(x^{2\ell})$.
  \end{itemize}
\end{theo}
\begin{remark}
The ambiguity in the sign of
  $\sum_{i=1}^{\ell-1} x^{2i} f_i(x^{2\ell})$ comes from the following fact. If $k$ is even,
  the
  transformation $\textup{Inv}(x,\xi) = (-x,-\xi)$ is an
  $H$-preserving diffeomorphism that transforms the function $f(x)$ from 
  Eq.~\eqref{equ:normal-form_analytic} to $-f(-x)$. However, when $k = 2\ell$ with $\ell \ge 2$, 
  $\textup{Inv}(x,\xi)$ cannot be included into a
  smooth $H$-preserving flow and hence is not of Moser type: when
  $H = \xi^2 - x^{2\ell}$ this follows since $\textup{Inv}$ swaps the
  connected components of the level sets of $H$; when $H = \xi^2 + x^{2\ell},$ one can show that the
  `action variable'
  $$
  I'(h)/2 = \int_{-\sqrt{h}}^{\sqrt{h}} \frac{\dd \xi}{k (\xi^2 -
    h)^{\frac{k-1}{k}}} = h^{\frac{1-\ell}{2\ell}} \int_{-1}^{1} \frac{\dd s}{k (s^2 -
    1)^{\frac{k-1}{k}}}
  $$
  being singular for $\ell > 1$ is an obstruction; see \cite{Kudryavtseva2021b}.
  
  In the odd case, on the contrary, every $H$-preserving and
  orientation-preserving diffeomorphism is of Moser type by
  Proposition~\ref{prop/character_moser_type} and Lemma~\ref{lemm:A}.
\end{remark}
In order to prove Theorem~\ref{theo:classical-analytic}, let us first
show that one can achieve the normal form~\eqref{equ:normal-form_analytic} by
means of Moser isotopies, which, by Lemma~\ref{lemm:moser} amounts to
solving the cohomological equation
\[
  \dd H \wedge \dd u = (g(x, \xi) + f'(x))\dd \xi \wedge \dd x\,,
\]
for some smooth functions $u$ and $f$. In order to simplify notation,
we identify $\Omega^2$ with $\Omega^0$ using the canonical symplectic
form $\dd \xi \wedge \dd x$, and the corresponding Poisson bracket
$\{\cdot,\cdot\}$. Hence the above equation now reads:
\[
  \{H, u\} = g(x, \xi) + f'(x)\,.
\]

\begin{prop}\label{prop:formal}
  Let $H = \xi^2 \pm x^k$, $k\geq 2$.  For any formal series
  $g(x,\xi)\in\RM\formel{x,\xi}$ there exists a unique formal series
  of the form $c(x) = \sum_{i=0}^{k-2}x^i c_i(x^k)$,
  $c_i\in\RM\formel{x}$, for which the equation
  \begin{equation}
    \{H, u\} = g(x, \xi) - c(x)\label{equ:formal}  
  \end{equation}
  admits a formal solution $u(x,\xi)$. Moreover, if $g$ is analytic at
  the origin $(0,0)\in\RM^2$, then one can choose $u$ to be analytic
  at the origin, and hence all $c_i$'s are analytic at $x=0$.
\end{prop}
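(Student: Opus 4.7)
The plan is to use that $\RM\formel{x,\xi}$ is a free module of rank $2$ over $\RM\formel{x,H}$ with basis $\{1,\xi\}$, which is legitimate because of the identity $\xi^2 = H \mp x^k$. Decomposing $g = g_e(x,H) + \xi g_o(x,H)$ and $u = u_e(x,H) + \xi u_o(x,H)$ in this basis, a direct chain-rule calculation gives
\[
\{H,u\} \;=\; 2\xi\,\partial_x u_e\big|_H \;+\; \bigl(2H\,\partial_x u_o\big|_H \mp 2x^k\,\partial_x u_o\big|_H \mp k x^{k-1} u_o\bigr),
\]
where $\partial_x(\cdot)\big|_H$ denotes the partial derivative in $x$ at fixed $H$. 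Since $c(x)$ is $\xi$-even, equation~\eqref{equ:formal} splits into an odd part $2\,\partial_x u_e\big|_H = g_o$ and an even part $Mu_o = g_e - c(x)$, with $M := 2H\partial_x \mp (2x^k\partial_x + kx^{k-1})$ a first-order operator on $\RM\formel{x,H}$.

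The odd part is immediate: take $u_e(x,H) = \tfrac{1}{2}\int_0^x g_o(s,H)\,ds$, which is unique up to an additive function of $H$. For the even part, I would introduce the weighted grading $\deg x = 2$, $\deg H = 2k$ on $\RM\formel{x,H}$; the operator $M$ is then homogeneous of degree $2k-2$, so the equation decouples over weights. Writing an even weight as $N = 2(mk+r)$ with $0 \le r < k$, the weight-$N$ monomials in $(x,H)$ are $x^{N/2-kq}H^q$ for $q=0,\dots,m$, while the unknowns are the coefficients $A_j$ of $u_o$ at weight $N-2k+2$ (for $j=0,\dots,J$), together with the single coefficient $c_{N/2}$ (free when $r\neq k-1$, forced to $0$ when $r = k-1$). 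Matching coefficients yields the triangular system
\[
\alpha_q A_{q-1} + \beta_q A_q \;=\; g_{e;\,N/2-kq,\,q} - c_{N/2}\,\delta_{q,0}, \qquad q = 0,\dots,m,
\]
where $\alpha_q := N - 2kq + 2$ and $\beta_q := \mp(N - 2kq - k + 2)$.

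Unique solvability follows from a short case analysis. When $r = k-1$, each $\beta_q = \mp k(2m+1-2q)$ is an odd multiple of $k$ and hence nonzero; the system solves forward uniquely for $(A_0,\dots,A_m)$, with $c_{N/2}=0$ automatic. When $r \neq k-1$, each $\alpha_q = 2k(m-q) + 2(r+1)$ with $q = 1,\dots,m$ is strictly positive; solving backward from $q=m$ (where $A_m$ does not exist) successively determines $A_{m-1},\dots,A_0$, and the $q=0$ equation finally determines $c_{N/2}$. This proves the formal part of the proposition.

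The main obstacle is the analytic assertion: one must show that when $g$ is analytic near the origin, so are the resulting $u$ and $c$. Since the recursive formulas from the previous step divide only by the denominators $\alpha_q$ and $\beta_q$, both of which are at least linear in the weight $N$, a Cauchy-type majorant estimate on Taylor coefficients should yield convergence of the series for $u$ and $c$ on a common polydisc around $0$. Alternatively, one may invoke the classical results of Brieskorn and Malgrange on the analyticity of Gauss--Manin connections for isolated hypersurface singularities, or appeal directly to the analytic framework developed in \cite{Francoise1978, Francoise1988}.
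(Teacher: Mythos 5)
The formal half of your argument is correct and takes a genuinely different route from the paper. Splitting $\RM\formel{x,\xi}=\RM\formel{x,H}\oplus\xi\,\RM\formel{x,H}$ (legitimate since $(x,\xi^2)\mapsto(x,\xi^2\pm x^k)$ is an invertible formal change of variables), your bracket computation is right, the odd part is solved by quadrature exactly as in the paper's Step~1, and the even part becomes the operator $M=2H\partial_x\mp(2x^k\partial_x+kx^{k-1})$, which is homogeneous of degree $2k-2$ for the weights $\deg x=2$, $\deg H=2k$. Your per-weight systems check out: with $N/2=mk+r$, the coefficients $\alpha_q=N-2kq+2$ and $\beta_q=\mp(N-2kq-k+2)$ are as you say, $\beta_q=\mp k(2m+1-2q)\neq 0$ when $r=k-1$, and $\alpha_q=2k(m-q)+2(r+1)>0$ otherwise, so each square triangular system gives existence \emph{and} uniqueness of $(u_o,c_{N/2})$ at once, with $c$ supported exactly off the residues $i\equiv k-1 \bmod k$. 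This replaces, in one stroke, the paper's explicit elimination by the chains $U_{i,j}=\sum_n a_{ij}^n u_{\tau^n(i,j)}$ (Step~2) \emph{and} its separate injectivity argument for $\{H,\cdot\}$ on $x\Omega^0$ (Step~3); the weighted-grading packaging is arguably cleaner and makes the normal form of $c$ transparent.

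The gap is the analyticity assertion, which is half of the proposition and is where the paper does its real work. The remark that the recursion ``divides only by $\alpha_q$ and $\beta_q$, at least linear in $N$'' is not by itself an argument: back-substitution also \emph{multiplies} by the off-diagonal entries ($\beta_q$ in the backward solve, $\alpha_q$ in the forward solve), which are of the same size as the diagonal ones, so what must be controlled are the products of ratios along chains of length roughly $N/2k$. These products are indeed tame --- in the backward case $|\beta_q|=\alpha_q-k<\alpha_q$, and in the forward case $\prod_j\frac{2j}{2j-1}=O(\sqrt{N})$ --- and bounding exactly such products is the content of the paper's Beta-function estimate $|a_{ij}^n|\le CR^{i'+j'}$, which yields convergence with radius at least $R/2$. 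So a majorant proof along your lines does exist, but you have not carried it out, and the heuristic as stated does not substitute for it. The fallback citations do not close the gap either: Brieskorn--Malgrange and the Fran{\c c}oise relative-cohomology theorems are complex-analytic statements, so one would still need to complexify $g$, apply them to $\xi^2\pm x^k$ with the basis $1,x,\dots,x^{k-2}$, and argue that $u$ and the $c_i$ can be taken real (e.g.\ by averaging with complex conjugation, using the uniqueness you already have at the formal level); none of this is in the proposal. Note that uniqueness of the analytic $c$ is automatic from formal uniqueness, so the only missing piece is an actual convergence proof for $u$ --- but it is genuinely missing.
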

\begin{remark}
  It will follow from the proof that the radius of convergence of $u$
  and $c_i$ is at least $R/2$, where $R$ is the radius of convergence
  of $g$, see~\eqref{equ:radius}.
\end{remark}
\begin{proof}[Proof of Proposition~\ref{prop:formal}]~\\
  \noindent\textbf{Step 1.} First observe that without loss of
  generality, $g = g(x, \xi)$ is an even function of $\xi$. Indeed,
  let $g(x, \xi) = g_0(x, \xi^2) + \xi g_1(x, \xi^2)$. Then the
  function $\check u = \check u(x,\xi)$ given by
  \begin{equation} \label{eq/solution_even} \check u(x,\xi) =
    \tilde{u}(x,\xi^2 \pm x^k), \quad \tilde{u}(x,h) = \frac{1}{2}
    \int_{0}^x g_1(s,h \mp s^k) ds,
  \end{equation}
  is a well-defined formal series, which is convergent whenever $g$
  is, and solves the cohomological equation
  \begin{equation} \label{eq/Moser} \{H, \check u\} =
    \xi g_1(x, \xi^2)\,.
  \end{equation}
  (Formula~\eqref{eq/solution_even} is similar to
  \eqref{eq/solution_cohomolog} with the roles of $x$ and $\xi$
  interchanged.)  Now, \eqref{equ:formal} is equivalent to
  $\{H,u-\check u\} = g_0(x,\xi^2) -
  c(x)$. 
  In what follows, we will therefore replace $g(x,\xi)$
  in~\eqref{equ:formal} by $g(x, \xi^2)$.  Thus, consider the
  cohomological equation
  \begin{equation}
    \label{eq/Moser3}
    \{H, u\}= s(x, \xi^2)\,,
  \end{equation}
  where $s = g(x, \xi^2) - c(x)$ with $c = c(x)$ an unknown formal
  series to be determined.
    
  \noindent\textbf{Step 2.} Take
  $u_{ij} = \frac{1}{2 (i+1) }x^{i+1} \xi^{2j-1}$ with
  $i\geq 0, j\geq 1$. Then
  \[
    \{H,u_{i,j}\} = x^{i} \xi^{2j} \mp \frac{(2 j-1) k}{2 (i+1)}
    x^{i+k} \xi^{2j-2}\,.
  \]
  Hence summing up the monomials
  $u_{ij} = \frac{1}{2 (i+1) }x^{i+1} \xi^{2j-1}$ with appropriate
  coefficients, one can eliminate the variable $\xi$ from the
  symplectic form, as follows. For notational convenience we introduce
  $\tau(i,j)=(i+k, j-1)$ and $h(i,j)=\frac{(2j-1)k}{2(i+1)}$, and
  denote $(x\xi)^{(i,j)}:=x^i\xi^{2j}$, so that the previous equation
  writes
  \[
    \{H, u_{i,j}\} = (x\xi)^{(i,j)} \mp h(i,j)(x\xi)^{\tau(i,j)}\,.
  \]
  Notice that the $j$-th iterate $(x\xi)^{\tau^{j}(i,j)} = x^{i+kj}$ depends only
  on $x$. Hence we may solve the equation
  \begin{equation}
    \{H, U_{i,j}\} = (x\xi)^{(i,j)} \mod \RM[x] \qquad  \text{(with } i\geq 0, j\geq 1 \text{)}
    \label{eq:Uij}
  \end{equation}
  using an Ansatz of the form
  \[
    U_{i,j} = \sum_{n=0}^{j-1}a_{ij}^n u_{\tau^{n}(i,j)}\,.
  \]
  In order to determine the coefficients $a_{ij}^n$ we write
  \begin{align}
    \label{equ:HUij}
    \{H, U_{i,j}\}
    & = \sum_{n=0}^{j-1}a_{ij}^n  (x\xi)^{\tau^{n}(i,j)} \mp
      \sum_{n=0}^{j-1}a_{ij}^n h(\tau^{n}(i,j)) (x\xi)^{\tau^{n+1}(i,j)}\\
    & = a_{ij}^0  (x\xi)^{\tau^{0}(i,j)} + \sum_{n=1}^{j-1}\left(a_{ij}^n \mp a_{ij}^{n-1}h(\tau^{n-1}(i,j)) \right)(x\xi)^{\tau^{n}(i,j)}\\
    & \quad \mp a_{ij}^{j-1}h(\tau^{j-1}(i,j)) x^{i+kj}
  \end{align}
  in which the last term on the right belongs to
  $\RM[x]$. Therefore~\eqref{eq:Uij} is satisfied if and only if
  \[
    \begin{cases}
      a_{ij}^0 = 1\\
      a_{ij}^n \mp a_{ij}^{n-1} h(\tau^{n-1}(i,j)) = 0 \quad\text{
        for } 1\leq n\leq j-1\,,
    \end{cases}
  \]
  which gives, for $1\leq n\leq j-1$,
  \[
    a_{ij}^n = (\pm 1)^n \prod_{p=0}^{n-1} h(\tau^{p}(i,j)) = (\pm
    1)^n\frac{(2j-1)k}{2(i+1)}\frac{(2j-3)k}{2(i+1+k)}\cdots \frac{(2j
      -2n+1) k}{2(i+(n-1)k+1)}\,.
  \]
  To summarize, we have obtained
  \begin{align}
    \label{equ:Uij-full}
    \{H, U_{i,j}\} & = (x\xi)^{(i,j)} \mp a_{ij}^{j-1} h(\tau^{j-1}(i,j)) x^{i+kj}\\
   & = (x\xi)^{(i,j)}  - (\pm 1)^j \prod_{p=0}^{j-1} h(\tau^{p}(i,j))x^{i+kj}\,.
  \end{align}
  Hence if we write
  \[
    g (x,\xi^2) = \sum_{0\leq i \atop 0\leq j} g_{i,j}(x\xi)^{(i,j)}
  \]
  we can solve~\eqref{eq/Moser3} formally by
  \[
    u:=\sum _{i,j}g_{i,j}U_{i,j} = \sum_{i,j}\sum_{n=0}^{j-1}
    g_{i,j}a_{ij}^n u_{\tau^{n}(i,j)}\,.
  \]
  For a fixed value $(\tilde \imath, \tilde \jmath)$ of
  $\tau^{n}(i,j) = (i+nk, j-n)$ there are only a finite number of
  relevant indices $n$, namely
  $0\leq n\leq \lfloor \frac{\tilde \imath}{k} \rfloor $. Therefore we
  can write
  \[
    u = \sum_{\tilde \imath, \tilde \jmath}\sum_{n=0}^{\lfloor
      \frac{\tilde \imath}{k} \rfloor} g_{i,j}a_{ij}^n u_{(\tilde
      \imath, \tilde \jmath)}, \quad \mbox{ where } \ \ (i,j) =
    \tau^{-n}(\tilde \imath, \tilde \jmath) = (\tilde\imath - nk,
    \tilde\jmath + n)\,.
  \]
  Let us show that the summation leads to a real-analytic function $u$
  (and hence $c$) solving~\eqref{eq/Moser3}. For this we need to show
  that there exist constants $C,R$ such that
  \[
    \forall \tilde\imath\geq 0, \tilde\jmath\geq 0, \quad
    \abs{\sum_{n=0}^{\lfloor \frac{\tilde \imath}{k} \rfloor}
      g_{i,j}a_{ij}^n } \leq C R^{\tilde\imath + \tilde\jmath}\,.
  \]
  Since we already know that
  $\abs{g_{i,j}}\leq cr^{i+j}\leq c r^{i+kj} = cr^{\tilde\imath +
    k\tilde\jmath}\leq c(r^k)^{\tilde\imath + \tilde\jmath}$, it is
  enough to show that
  \begin{equation}
    \abs{\sum_{n=0}^{\lfloor \frac{\tilde \imath}{k} \rfloor}
      a_{ij}^n } \leq C R^{\tilde\imath + \tilde\jmath}\,.
    \label{equ:sum_aij}
  \end{equation}
  Let us denote $i' = \frac{i+1}{k}$ and $j'=\frac{2j-1}{2}$. We have
  \[
    \abs{a_{ij}^n} =
    \frac{j'}{i'}\frac{(j'-1)}{(i'+1)}\cdots\frac{(j'-n+1)}{(i'+n-1)}
    =
    \frac{\Gamma(j'+1)}{\Gamma(j'-n+1)}\frac{\Gamma(i')}{\Gamma(i'+n)}
  \]
  which can be rewritten in terms of the beta function
  $B(x,y)=\frac{\Gamma(x)\Gamma(y)}{\Gamma(x+y)}$ as
  \[
    \abs{a_{ij}^n} = \frac{B(i',j'+1)}{B(i'+n, j'-n+1)}.
  \]
  It is known that $\abs{B(x,y)}\leq\pi$ for $x,y\geq 1$,
  see~\cite[5.12.2]{NIST2010}, and that
  $\abs{\frac{1}{B(\frac{x+y+1}{2}, \frac{x-y+1}{2})}} \leq 2^{x-1}x$
  for $x\geq 1$, see~\cite[5.12.5]{NIST2010}. This gives, for
  $i', j' \geq 1$
  \begin{equation}
    \abs{a_{ij}^n} \leq 2^{i'+j'-1}(i'+j')\pi \leq CR^{i'+j'}
    \label{equ:radius}  
\end{equation}
  for suitable constants. Since $i'+j'$ is constant in the
  sum~\eqref{equ:sum_aij}, we get that the sum is bounded by
  $(\lfloor \frac{\tilde \imath}{k} \rfloor+1) CR^{i'+j'} \leq \tilde
  C{\tilde R}^{\tilde\imath + \tilde\jmath}$ for suitable
  $\tilde C, \tilde R$, which proves the analyticity of $u$.

  \noindent\textbf{Step 3.} It remains to understand the freedom left to
  simplify the function $c = c(x)$. We may decompose the space
  $\Omega^0$ of analytic functions of $(x,\xi)$ as
  \begin{equation}
    \Omega^0 = \RM\{\xi\} \oplus x \Omega^0\,,
    \label{equ:Omega0}
  \end{equation}
  where $\RM\{\xi\}$ denotes the space of real analytic functions of
  $\xi$. Since $u_{i,j}\in x \Omega^0$, the computation in Step 2
  shows that the map $u\mapsto \{H, u\}$ is surjective from
  $x\Omega^0$ onto $\RM\{x,\xi^2\} \mod \RM\{x\}$. Let us show that
  \begin{equation}
    \{H, \cdot\} : x\Omega^0 \to \RM\{x,\xi^2\} / \RM\{x\}
    \label{equ:isomorphism}
  \end{equation}
  is actually an isomorphism. The equation $\{H, u\}\in\RM\{x\}$
  writes
  \[
    2\xi\partial_x u \mp k x^{k-1}\partial_\xi u = c(x)
  \]
  for some analytic function $c$. Let us write $u=x^mv$,
  $v\in\Omega^0$, where $m\geq 1$ is maximal (assuming $u\neq
  0$). Plugging this Ansatz we see that $c$ must be divisible by
  $x^{m-1}$. Dividing the equation by $x^{m-1}$ we get
  \[
    2\xi(mv + x\partial_x v) \mp k x^k \partial_\xi v = \tilde c(x)\,,
    \quad \tilde c \in\Omega^0\,,
  \]
  which actually implies (taking $x=\xi=0$) that $\tilde
  c(0)=0$. Restricting to $x=0$ we get $2m \xi v(0,\xi)=0$ and hence
  $v(0,\xi)=0$ for all $\xi$, therefore $v=x\tilde v$, which
  contradicts the maximality of $m$. Therefore, $u=0$, which shows
  that $\{H, \cdot\}$ is indeed injective on $x\Omega^0$.

  We may now turn to solutions to the cohomological equation
  $\{H, u\} = c(x)$. We decompose $u = u_0 + u_1$ according
  to~\eqref{equ:Omega0}, so that our equation becomes
  \[
    \{H, u_1\} = c(x) + \{u_0, H\}\,.
  \]
  As in Step 1, we can assume that
  $\{u_0, H\}=\pm kx^{k-1}\partial_\xi u_0$ is even in $\xi$ (notice
  that the function $u$ in~\eqref{eq/solution_even} belongs to
  $x\Omega^0$), which means that
  $u_0(\xi) = \sum_{j\geq 0}a_j \xi^{2j+1}$.  By the
  isomorphism~\eqref{equ:isomorphism}, each choice of $u_0$ leads to a
  unique solution $u_1$, and this solution must hence be given by the
  construction of Step 2. Namely, each monomial $\xi^{2j+1}$ gives by
  $\{H, \cdot\}$ a monomial of type $(x\xi)^{(k-1,j)}$ which is
  absorbed in a unique way by $U_{k-1,j}$ as in~\eqref{eq:Uij},
  leaving on the right-hand side a monomial in $\RM\{x\}$ of the form
  given by the last term of~\eqref{equ:HUij}, \emph{i.e.} a constant
  times $(x\xi)^{\tau^{j}(k-1,j)} = x^{k-1+jk}$. Therefore by
  suitably choosing $u_0$ we may formally eliminate any series of the
  form
  \[
    \sum_{j \ge 0} \tilde a_j x^{k-1+jk} = x^{k-1} \sum_{j \ge 0}
    \tilde a_j x^{jk}
  \]
  and only such series. It is left to observe that if
  $x^{k-1}\tilde c(x^k)$ is real-analytic, then we may choose $u$ to be
  given by the explicit formula (see~\eqref{eq/solution_cohomolog})
  \begin{equation}
    u = \tilde{u}(\xi,H(x,\xi)),
    \quad \mbox{where } \  \tilde{u}(\xi,h) = \frac{\mp 1}{k} \int_{0}^\xi \tilde c(h-s^2) \textup{d}s,
  \end{equation}
  and hence to be real-analytic.
\end{proof}

\begin{proof}[Proof of Theorem~\ref{theo:classical-analytic}]~\\
  By Proposition~\ref{prop:formal}, using Moser isotopies, we can
  transform the symplectic structure to the following form:
 $$
 \psi^*\omega = \sum_{i=0}^{k-2} x^i c_i(x^k) \textup{d} \xi \wedge
 \textup{d} x = \dd (f \textup{d} \xi), \ f = \sum_{i=1}^{k-1} x^i
 f_i(x^k).
  $$
  According to Lemma~\ref{lemm:A} and
  Proposition~\ref{prop/character_moser_type}, further simplification
  of the normal form is possible only by using linear transformations
  $A$; this is effective only when $k$ is even, in which case the even
  part of $f$ is defined up to sign. Indeed, in this case
  $\textup{Inv}(x,\xi) = (-x,-\xi)$ is an $H$-preserving
  diffeomorphism that is not of Moser type and
  $\textup{Inv}^*\psi^*\omega = \textup{d}(-f(-x) \textup{d}\xi)$. 
  
  The conclusion is that we have found
  exactly one representative for every orbit of the group of 
  real-analytic diffeomorphism germs.
  The result follows.
\end{proof}

In a similar way, one can prove the following result, which is useful
for the fiber-wise symplectic classification problem. See also \cite[Theorem 4]{Francoise1978}
(or \cite[Theorem 2.7]{Francoise1988})
for the case of general isolated singularities of (complex-)analytic functions; our result
can be viewed as a specification of this general theorem to $A_k$ singularities.

\begin{theo}
  \label{theo:classical-analytic_H}
  Let $H$ be a real-analytic Hamiltonian on $(\RM^2, \omega)$
having $A_{k-1}, k \ge 2,$ singularity at the origin $O$.
  Then the pair $(H, \omega)$ has also the following local symplectic normal form near $O$ for the group of
  $H$-preserving real-analytic diffeomorphism germs:
  \begin{equation}
    \big(H = \xi^2 \pm x^{k}, \ \omega = \sum_{i=0}^{k-2} x^i \tilde{c}_i(H) \textup{d} \xi
  \wedge \textup{d} x \big)
  \end{equation}
  for some real-analytic functions $\tilde{c}_i$ with $\tilde c_0(0) \ne
  0$ defined uniquely up to simultaneously changing the sign of the functions
    $\tilde c_{2i+1}(H), \ i = 0, \ldots, \ell -2,$ when $k = 2\ell$ is even. In other words,
      \begin{itemize}
  \item if $k$ is odd, then (the Taylor series of) $\tilde c_{i}, i = 0, \ldots, k -2,$ form a complete
  and minimal set of symplectic invariants;
  \item if $k = 2\ell$ is even, then (the Taylor series of) $\tilde c_{i}, i = 0, \ldots, k -2,$ form a complete
    set of symplectic invariants with the only relation given by changing the sign of 
    $\sum_{i=0}^{\ell-2} x^{2i+1} \tilde c_{2i+1}(H)$.
  \end{itemize}
\end{theo}
\begin{proof}
  By Theorem~\ref{theo:classical-analytic}, we can assume that
  $g = \sum_{i=0}^{k-2} x^i c_i(x^k)$.  We would like to solve the
  cohomological equation
  \begin{equation} \label{eq/cohomological_H} \dd H \wedge \dd u =
    (g(x) - \tilde c(x, H))\textup{d} \xi \wedge \textup{d} x, \qquad
    \tilde c = \sum_{i=0}^{k-2} x^i \tilde{c}_i(H),
  \end{equation}
  where $\tilde c_i = \tilde c_i(H)$ are real-analytic functions to be
  determined later.  In terms of the Poisson bracket, equation
  \eqref{eq/cohomological_H} reads
  \begin{equation} \label{eq/cohomological_Poisson_H} \{H, u\} = g(x)
    - \tilde c(x, H) = \sum_{i=0}^{k-2} \sum_{j \ge 0} x^i (c_{ij}
    x^{jk} - \tilde{c}_{ij} H^{j}),
  \end{equation}
  where $c_i = \sum_{j \ge 0} c_{ij} x^{jk} $ and
  $\tilde{c}_i = \sum_{j \ge 0} \tilde{c}_{ij} H^{j}$.

  Recall that for $u_{ij} = \frac{1}{2 (i+1) }x^{i+1} \xi^{2j-1}$ with
  $i\geq 0, j\geq 1$, we have that
  \begin{equation} \label{eq/monomial_H} \{H, u_{i,j}\} = x^{i}
    \xi^{2j} \mp \frac{(2 j-1) k}{2 (i+1)} x^{i+k} \xi^{2j-2}\,.
  \end{equation}
  It follows that for every $i = 0, \ldots, k-2,$ and $j \ge 0,$ there
  exists a polynomial $U_{ij}$ and a unique constant
  $b_j, \ |b_j| \ge 1,$ such that
  \[
  \{H, U_{ij}\} =   b_j x^{i+jk} -  x^i H^j\,.
  \]
  Indeed, one has
  \[
    x^i H^j = \sum_{n=0}^j(\pm 1)^n \binom{j}{n} (x\xi)^{\tau^n(i,j)}
\]
and using~\eqref{equ:Uij-full} we see that each term in this sum will
contribute to a coefficient of $x^{i+kj}$ of sign
$(\pm 1)^n \times (\pm 1)^{j-n} = (\pm 1)^j$, independent of $n$.

  Hence setting
  \[
    U := \sum_{i=0}^{k-2} \sum_{j \ge 0} \frac{c_{ij}}{b_j}U_{ij},
  \]
  we get that
  \[
    \{H, U\} = g(x) - \sum_{i=0}^{k-2} \sum_{j \ge 0}
    \frac{c_{ij}}{b_j} x^i H^j.
  \]
  This shows that equations
  (\ref{eq/cohomological_H}-\ref{eq/cohomological_Poisson_H}) can be
  solved formally. To show that
  (\ref{eq/cohomological_H}-\ref{eq/cohomological_Poisson_H}) admits a
  real-analytic solution, one can estimate the coefficients of $U$
  similarly to how this is done in the proof of
  Theorem~\ref{theo:classical-analytic}. However, a simpler proof can
  be obtained as follows. First note that by Eq.\eqref{eq/monomial_H},
  the constants $b_j$ are always such that $|b_j| \ge 1$.  Hence the
  series
$$
g = \sum_{i=0}^{k-2} \sum_{j \ge 0} c_{ij} x^{i+jk}
$$
being convergent implies that so is the series
$$
\tilde c(x, H) = \sum_{i=0}^{k-2} \sum_{j \ge 0} \tilde{c}_{ij} x^{i}
H^{j},
$$
where $\tilde{c}_{ij} = c_{ij}/b_j$. But Proposition~\ref{prop:formal}
now implies that we can solve the cohomological equation
$$
 \{H, \tilde U\} = c(x) - \tilde c(x, H)\,,
$$
where $c = c(x)$ is the unique normal form of
Proposition~\ref{prop:formal}, which is convergent.  But, by the
construction of $\tilde c$, the function $g$ solves the latter
cohomological equation formally. Hence, by uniqueness, $c=g$.

Thus, $u= \tilde U(x, \xi)$ is an analytic function solving
\eqref{eq/cohomological_H}.  The result follows.
\end{proof}

As a corollary, we get

\begin{theo}
  \label{theo:classical-analytic_fibration_preserving_case}
   Let $H$ be a real-analytic Hamiltonian on $(\RM^2, \omega)$
having $A_{k-1}, k \ge 2,$ singularity at the origin $O$.
  Then under the fibration-preserving equivalence,   
  the pair $(H, \omega)$ has the following local symplectic analytic normal form near $O$:
  \begin{equation}
    \big(H = \xi^2 \pm x^{k}, \ \omega = (1+\sum_{i=1}^{k-2} x^i \hat{c}_i(H)) \textup{d} \xi
  \wedge \textup{d} x \big)
  \end{equation}
  for some real-analytic functions $\hat{c}_i$  defined uniquely up to simultaneously changing the sign of the functions
    $\hat c_{2i+1}(H), \ i = 0, \ldots, \ell -2,$ when $k = 2\ell$ is even. In other words,
      \begin{itemize}
  \item if $k$ is odd, then (the Taylor series of) $\hat c_{i}, i = 1, \ldots, k -2,$ form a complete
  and minimal set of symplectic invariants;
  \item if $k = 2\ell$ is even, then (the Taylor series of) $\hat c_{i}, i = 1, \ldots, k -2,$ form a complete
    set of symplectic invariants with the only relation given by changing the sign of 
    $\sum_{i=0}^{\ell-2} x^{2i+1} \hat c_{2i+1}(H)$.
  \end{itemize}
  \end{theo}
\begin{proof}
Let
$\tilde H$ be a real-analytic Hamiltonian  defining the same (singular) foliation as $H$ near the origin. Then
$\tilde H = h(H)$ for an analytic germ $h$ with $h'(0) \ne 0.$ Equivalently, we can write $h = H g(H)$ for
an analytic germ $g$ with $g(0) \ne 0.$ The fibration-preserving classification can be obtained by 
simplifying  the  $H$-preserving normal of $(H, \omega)$  by declaring
$(\tilde H = H g(H), \omega)$ and $(H, \omega)$ to be equivalent. On the level of $H$-preserving normal forms,
this can specifically be done as follows.

Assume (without loss of generality) that $g(0) > 0$ and change the variables according to
$$\psi(x,\xi) = (\tilde x, \tilde \xi), \quad \tilde \xi = \xi \sqrt{g(H)}, \ \tilde x = x (g(H))^{1/k}.$$ Then
$Hg(H) = {\tilde \xi}^2 \pm {\tilde x}^k$ and it follows that the pairs $(H, \omega)$ and $(H, \tilde \omega = (\psi^{-1})^{*} \omega)$
are equivalent under the fibration-preserving equivalence. In other words, the normal forms
 $$
 (h(H) = {\tilde \xi}^2 \pm {\tilde x}^k, \ \tilde \omega = \sum_{i=0}^{k-2} \tilde x^i \hat{c}_i(h(H)) \textup{d} \tilde \xi
 \wedge \textup{d} \tilde x) \quad \mbox{ and } \quad (H = {\xi}^2 \pm {x}^k, \ \omega =
 \sum_{i=0}^{k-2} x^i c_i(H) \textup{d} \xi \wedge \textup{d} x),
  $$  
  where
  \begin{equation}
    c_i(H) = \hat{c}_i(h(H)) g(H)^{\frac{i+1}{k} - \frac{1}{2}} (g(H)+g'(H) H) = \hat{c}_i(h(H)) g(H)^{\frac{i+1}{k} - \frac{1}{2}} h'(H),\label{equ:hat_c_i}  
  \end{equation}
  are equivalent under the fibration-preserving equivalence.
  
  Note that by applying the orientation-reversing diffeomorphisms
  $(x,\xi) \mapsto (x, -\xi)$ if necessary, we can achieve that
  $\hat{c}_0(0) > 0$.  Then we can uniquely solve the equation
  \begin{equation} \label{eq/ode} h'(H) \, |h(H)|^{\frac{1}{k} -
      \frac{1}{2}} \hat{c}_0(h(H)) = |H|^{\frac{1}{k} - \frac{1}{2}}
  \end{equation}
  for $h$ with the condition $h(0) = 0$, making $c_0(H) \equiv
  1$. Indeed, Eq.~\eqref{eq/ode} with the condition $h(0) = 0$ is
  equivalent to the integral equation
  $$
  \int_0^{h(H)} |h|^{\frac{1}{k} - \frac{1}{2}} \hat{c}_0(h)
  \textup{d} h = \int_0^{H} |H|^{\frac{1}{k} - \frac{1}{2}} \textup{d}
  H = \textup{sign}(H) \frac{2k}{2+k} |H|^{\frac{1}{k} + \frac{1}{2}},
  $$
  which has the form
  $\textup{sign}(h) \, |h|^{\frac{1}{k} + \frac{1}{2}} f(h) =
  \textup{sign}(H) |H|^{\frac{1}{k} + \frac{1}{2}}$ for some smooth
  function $f = f(h)$ with $f(0) = \hat{c}_0(0) > 0.$ Equivalently, we
  can write this equation as
  $$
  h \hat f(h) - H = 0, \quad \hat f = f(h)^{\frac{2k}{2+k}}, \ \hat
  f(0) > 0,
  $$
  which admits a unique real-analytic solution $h = h(H)$ near the
  origin by the Implicit Function Theorem. The result follows.
\end{proof}
\begin{remark}
  Note that orientation-reversing diffeomorphisms (e.g.
  $\psi(x,\xi) = (x, -\xi)$) in
  Theorem~\ref{theo:classical-analytic_fibration_preserving_case} are
  allowed. If we restrict to orientation-preserving diffeomorphisms,
  then the normal form is given by
  $$
\big(H = {\xi}^2 \pm {x}^k, \ \pm 1 + \sum_{i=1}^{k-2} x^i \hat{c}_i(H)\big)
  \textup{d} \xi \wedge \textup{d} x),
  $$
  where the sign is plus if
  $\textup{d} \xi \wedge \textup{d} x/ \omega > 0$ and minus if
  $\textup{d} \xi \wedge \textup{d} x/\omega < 0$.
\end{remark}

\section{Symplectic classification in the $C^\infty$-smooth
  case} \label{sec/classification_smooth}

We shall see in this section that in the $C^\infty$-smooth case, the
symplectic classification works qualitatively differently depending on
whether $H$ is a proper map in a neighbourhood of the origin. It turns
out that for the `compact' $A_{2\ell -1}$ singularities for which $H$
can be written in the form
$$H = \xi^2 + x^{2\ell}, \ \ell > 1,$$ the symplectic invariants are given by a collection of both function germs
and Taylor series\footnote{In the classical elliptic case $\ell = 1$,
  we have only one function germ, the action variable, as the
  symplectic invariant for the $H$-preserving equivalence and no
  symplectic invariants in the fibration-preserving case; see
  \cite{DeVerdiere1979}}, whereas for `non-compact' $A_k$
singularities --- which can then be expressed as the isolated
singularity of the Hamiltonian $H = \xi^2 - x^k,$ with $k$ arbitrary
--- the symplectic invariants may be expressed in terms of Taylor
series only; see Theorems~\ref{theo:classical-smooth}, \ref{theo:classical-smooth_H} and
\ref{theo:classical-smooth_fibration_preserving_case} below.

In the latter case of $H = \xi^2 - x^k,$ we will need the following
result.

\begin{theo} \label{theorem/remaining_cases} Let $H = \xi^2 - x^k$
  with $k \ge 2$ and $\omega_0$ and $\omega_1$ be arbitrary symplectic
  forms near $0\in\RM^2$.  Assume that the corresponding local action
  variables are equal up to a $C^\infty$-smooth function $f$:
$$
I_0(h) := \int\limits_{R(h)} \omega_0 = I_1(h) - f(h) :=
\int\limits_{R(h)} \omega_1 - f(h),
$$
where the region $R(h)$ is enclosed by the level curves $H = 0,$
$x = (\xi^2 - h)^{1/k}, \ h \le 0,$ and $\xi = \pm \varepsilon;$ see
Figure~\ref{Action_vars}.  Then there exists a local $C^\infty$-smooth
$H$-preserving isotopy $\psi_t$ of Moser type, such that
$
  \psi_1^* \omega_1 = \omega_0\,.
$
\end{theo}
\begin{figure}[htbp]
  \begin{center}
    \includegraphics[width=0.7\linewidth]{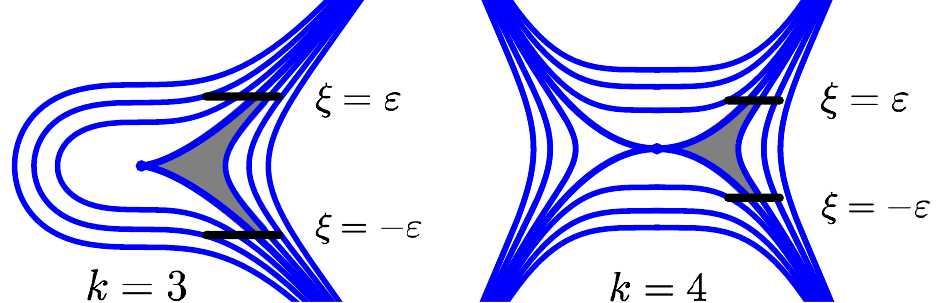}
    \caption{$H = \xi^2 - x^{k}$ singularities for $k = 3$ (left) and
      $k = 4$ (right).  The region $R(h)$ is shaded gray.}
    \label{Action_vars}
  \end{center}
\end{figure}
\begin{proof}
  The idea of the proof is to use the smoothness of (the derivative
  of) the difference of the local action variables
  \[
    f'(h) = \int_{-\varepsilon}^{\varepsilon} \frac{g((\xi^2 -
      h)^{\frac{1}{k}},\xi)}{k (\xi^2 - h)^{\frac{k-1}{k}}} \dd \xi
  \]
  to show the smoothness of the solution
  \begin{equation}
    u(x,\xi) = \tilde{u} (\xi, \xi^2 - x^k), \quad \mbox{ where } \quad \tilde u(\xi,h) = \int^\xi_{-\varepsilon} \dfrac{g((t^2-h)^{1/k},t)}{k(t^2-h)^{\frac{k-1}{k}}} dt,
  \end{equation}
  to the cohomological equation
  \begin{equation}
    \dd H \wedge \dd u = \omega_1 - \omega_0 =: g(x,\xi) \dd \xi \wedge \dd x.
  \end{equation}
  This can be done similarly to the case of the cusp ($k = 3$)
  singularity; see \cite{Kudryavtseva2021}.  We give a complete proof
  in the Appendix.
\end{proof}

\begin{remark}
  We note that an analogous result is false for `compact'
  $A_{2\ell-1}$ singularities $H = \xi^2 + x^{2\ell}$ when $\ell \ge 2$: (the
  asymptotics of the) action variables do not suffice for their
  symplectic classification, neither in the $H$-preserving nor in the
  fibration preserving case.  This follows from
  Theorems~\ref{theo:classical-smooth} and
  \ref{theo:classical-smooth_fibration_preserving_case} below.  As a
  concrete example, one can take
$$
\big(H = \xi^2 + x^{2\ell},\, \omega_0 = \dd \xi \wedge \dd x\big)
\quad \mbox{ and } \quad \big(H = \xi^2 + x^{2\ell},\, \omega_1 =
(1+xc_1(x^{2\ell}))\dd \xi \wedge \dd x\big),
$$
where $\ell \ge 2$ and $c_1$ is an arbitrary function whose Taylor
series at the origin does not vanish identically. Equivalently, the
following two integrable Hamiltonian systems
$$
(H_0 = \xi^2 + x^{2\ell},\, \dd \xi \wedge \dd x) \quad \mbox{ and }
\quad (H_1 = \xi^2 + V(\tilde x),\, \dd \xi \wedge \dd {\tilde x}),
$$
where the potential $V(\tilde x) = x(\tilde x)^{2\ell}$ for $\tilde x = x+ f_1(x)$ with $f_1(x) = \int_0^x sc_1(s^{2\ell}) \dd s$ and $ \ell \ge 2$, are not
($C^\infty$-smoothly or real-analytically) fiberwise symplectically
equivalent even though their (\textit{real}) action variables are
identically the same.
\end{remark}

To generalise the normal form results of the previous subsection to
the $C^\infty$-smooth case, we will also need to find a
$C^\infty$-smooth solution to a certain integral equation as in the
following lemma:

\begin{lemm} \label{lemma/integral_Abel} Let
  $G \colon [0,h_0] \to \mathbb R$ be a $C^\infty$-smooth function on
  $[0,h_0], \, h_0>0,$ and $k \ge 2$ be a natural number. Then for
  each $i=0,1, \ldots, k-2$, the equation
  \[
    \int_{-1}^{1} \dfrac{c_i(h(1-\xi^2))}{(1-\xi^2)^{\frac{k-1-i}{k}}}
    \dd \xi = G(h)
  \]
  admit a $C^\infty$-smooth solution $c_i\in\Cinf([0,h_0])$ (with an
  ``explicit'' formula).
\end{lemm}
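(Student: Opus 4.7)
The plan is to invert the integral operator explicitly via a classical Abel-type formula and then to settle smoothness at the boundary point $s=0$ by a scaling argument. Setting $\alpha := (k-1-i)/k$, which lies in $(0,1)$ since $i\in\{0,\ldots,k-2\}$, the integrand is even in $\xi$, so the substitutions $u=1-\xi^2$ and then $s=hu$ put the equation in the form
\[
\int_0^h \frac{c_i(s)\, s^{-\alpha}}{\sqrt{h-s}}\, ds \;=\; h^{1/2-\alpha}\, G(h).
\]
This is the classical half-order Abel equation for the unknown $\phi(s):=c_i(s)\, s^{-\alpha}$ against the right-hand side $\Psi(h):=h^{1/2-\alpha}G(h)$. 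The standard Abel inversion formula yields the candidate
\[
c_i(s) \;=\; \frac{s^{\alpha}}{\pi}\,\frac{d}{ds}\!\int_0^s \frac{t^{1/2-\alpha}\, G(t)}{\sqrt{s-t}}\, dt.
\]

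The next and key step is to verify that this formula defines a $C^\infty$ function on $[0,h_0]$; away from $0$ this is immediate, but the singular prefactors $s^{\pm \alpha}$ and $t^{1/2-\alpha}$ are a priori delicate at the origin. The trick is to rescale $t=s y$ in the inner integral, which gives
\[
\int_0^s \frac{t^{1/2-\alpha}\, G(t)}{\sqrt{s-t}}\, dt \;=\; s^{1-\alpha}\, F(s),\qquad F(s) := \int_0^1 \frac{y^{1/2-\alpha}\, G(sy)}{\sqrt{1-y}}\, dy.
\]
Because $\alpha\in(0,1)$, the weight $y^{1/2-\alpha}(1-y)^{-1/2}$ is integrable on $[0,1]$; combined with $\partial_s^n G(sy) = y^n G^{(n)}(sy)$, differentiation under the integral sign shows $F\in C^\infty([0,h_0])$. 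The singular factors then cancel after applying $d/ds$ and multiplying by $s^\alpha$, leaving the manifestly smooth closed form
\[
c_i(s) \;=\; \frac{1}{\pi}\bigl((1-\alpha) F(s) + s F'(s)\bigr).
\]

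The main obstacle is exactly this cancellation of the singular weights at the origin, which the scaling $t=sy$ resolves in one stroke. It remains to check that the $c_i$ obtained this way actually solves the original equation. I would do this by invoking the standard Abel inversion theorem, or, self-contained, by first testing on monomials $G(h)=h^n$ — where both sides reduce to nonzero Beta-function multiples of $h^n$ and $s^n$ respectively, matching via $B(n-\alpha+1,\tfrac{1}{2})$ and $B(n-\alpha+\tfrac{3}{2},\tfrac{1}{2})$ — and then splitting a general smooth $G$ into its $N$-th Taylor polynomial at $0$ plus a remainder that is flat of order $N$; the explicit representation above clearly maps flat inputs to flat outputs (again by the $C^\infty$ regularity of $F$), so the two contributions combine to give a smooth solution of arbitrarily high order of contact, hence a $C^\infty$ solution.
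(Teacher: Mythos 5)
Your proof is correct and takes essentially the same route as the paper's: substitute $u=1-\xi^2$ and rescale to reach the half-order Abel equation, apply the classical inversion formula, and use the change of variables $t=sy$ to cancel the singular powers and obtain the manifestly smooth closed form $c_i(s)=\frac{1}{\pi}\bigl((1-\alpha)F(s)+sF'(s)\bigr)$, which coincides with the paper's formula since $1-\alpha=\frac{i+1}{k}$. One small caveat: in your optional self-contained verification, agreement of the forward transform with $G$ to infinite order at $0$ does not by itself give exact equality, so you should rely on the standard Abel inversion theorem (as the paper does) rather than on the Taylor-polynomial-plus-flat-remainder argument to close that step.
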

\begin{proof}
  Letting $u=1-\xi^2$ for $\xi\in[0,1]$ we obtain
  \[
    \int_{-1}^{1} \frac{c_i(h(1-\xi^2))}{(1-\xi^2)^{\frac{k-1-i}{k}}}
    \dd \xi = \int_0^1 \frac{c_i(hu)}{u^{\frac{k-1-i}{k}}}\frac{\dd
      u}{\sqrt{1-u}}.
  \]
  Introducing $\tilde c_i(t) := t^{\frac{i+1-k}{k}}c_i(t)$ and
  $\tilde G(h):= h^{\frac{i+1-k/2}{k}}G(h)$, our equation becomes
  \[
    \int_0^h \frac{\tilde c_i(t)}{\sqrt{h-t}}\dd t = \tilde G(h)\,.
  \]
  We recognize the Abel transform. Since $\tilde G \in L^1(0;h_0)$ we
  may write the inverse Abel transform \cite{Abel1826} to obtain:
  \[
    \tilde c_i(t) = \frac{1}{\pi} \frac{\dd{}}{\dd t} \int_0^t
    \frac{\tilde G(h)}{\sqrt{t-h}}\dd h\,.
  \]
  Note that the derivative of the integrand is not $L^1$, but we can
  write
  \begin{align}
    \int_0^t \frac{\tilde
    G(h)}{\sqrt{t-h}}\dd h
    & = t^{1/2}\int_0^1 \frac{\tilde G(tv)}{\sqrt{1-v}}\dd v \\
    & = t^{\frac{i+1}{k}}\int_0^1 v^{\frac{i+1-k/2}{k}} \frac{G(tv)}{\sqrt{1-v}}  \dd v\,,
  \end{align}
  in which form it is now licit to take the derivative under the
  integral sign. We have
  \begin{multline}
    \frac{\dd{}}{\dd t}
    \left(t^{\frac{i+1}{k}}\int_0^1 v^{\frac{i+1-k/2}{k}} \frac{G(tv)}{\sqrt{1-v}}\dd v \right) \\
    = t^{\frac{i+1-k}{k}}\left(\frac{i+1}{k}\int_0^1
      v^{\frac{i+1-k/2}{k}} \frac{G(tv)}{\sqrt{1-v}} \dd v + t\int_0^1
      v^{\frac{i+1+k/2}{k}} \frac{G'(tv)}{\sqrt{1-v}} \dd v \right).
  \end{multline}
  Hence, we obtain
  \[
    c_i(t) = t^{\frac{k-i-1}{k}} \tilde c_i(t) =
    \frac{1}{\pi}\left(\frac{i+1}{k}\int_0^1 v^{\frac{i+1-k/2}{k}}
      \frac{G(tv)}{\sqrt{1-v}} \dd v + t\int_0^1 v^{\frac{i+1+k/2}{k}}
      \frac{G'(tv)}{\sqrt{1-v}}\dd v \right),
  \]
  and we see that the right-hand side is smooth.
\end{proof}

We now turn to the right symplectic classification in the smooth
category.

\begin{theo}
  \label{theo:classical-smooth}
  Let $H$ be a $C^\infty$-smooth Hamiltonian on $(\RM^2, \omega)$
having $A_{k-1}, k \ge 2,$ singularity at the origin $O$.
  Then the pair $(H, \omega)$ has the following local symplectic normal form near $O$ for the group of
  $H$-preserving $C^\infty$-smooth diffeomorphism germs:
  \begin{equation}
    \big(H = \xi^2 \pm x^{k}, \ \omega = \textup{d}(f \textup{d} \xi)\big), \quad \mbox{ where } f =
    \sum_{i=1}^{k-1} x^i f_i(x^k),\label{equ:normal-form}
  \end{equation}
  for some $C^\infty$-smooth functions $f_i$ with $f_1(0) \ne
  0$ defined uniquely modulo the following relations:
  \begin{itemize}
  \item if $k = 2\ell + 1$ is odd, then $f_{i}$'s are uniquely defined up to addition of flat functions (that is,
  the Taylor series of $f_{i}$'s form a complete and minimal set of symplectic invariants);
  \item if $k = 2\ell$ is even and $H = \xi^2 - x^{2\ell}$, then $f_{i}$'s are uniquely defined up to addition of flat functions 
  and changing the
    sign of $\sum_{i=1}^{\ell-1} x^{2i} f_{2i}(x^{2\ell})$;
  \item if $k = 2\ell$ is even and $H = \xi^2 + x^{2\ell}$, the even-indexed functions $f_{2i}, 1 \le i \le \ell-1,$
    are uniquely defined up to addition of flat functions and changing the sign of
    $\sum_{i=1}^{\ell-1} x^{2i} f_{2i}(x^{2\ell})$.
  \end{itemize}
\end{theo}
\begin{remark}
  Thus, the only difference with the real-analytic case appears for
  $A_{2\ell-1}$ singularities given by
  $H = \xi^2 + x^{2\ell}, \ \ell \ge 1$, in which case these are the
  {\em germs} of the functions $f_{2i+1}, 0 \le i \le \ell-1,$ rather than
  their Taylor series that are the symplectic invariants.
\end{remark}
\begin{proof} \mbox{ }

 \noindent\textbf{Step 1.1.}  The first two cases ($k$ odd and $k = 2\ell$ even with $H = \xi^2 - x^{2\ell}$, respectively) follow
 from the analytic case using Borel summation and
 Theorem~\ref{theorem/remaining_cases} above. Indeed, one can take
 $f = f(x)$ to be any smooth function such that its Taylor series at
 the origin gives the formal normal form for $\omega.$ Then
 $\omega_0 = \textup{d}(f \textup{d} \xi)$ and $\omega_1 = \omega$
 satisfy the conditions of Theorem~\ref{theorem/remaining_cases}.

 More specifically, we first observe that there exists a smooth
 function $u$ that solves the cohomological equation
$$
\dd H \wedge \dd u = \omega_1 - \omega_0 \mod \quad \mathcal O(\infty)
\dd \xi \wedge \dd x,
$$
i.e., up to a flat 2-form. This follows from the formal solution to
this equation constructed in Theorem~\ref{theo:classical-analytic} by
applying the Borel summation. Thus without loss of generality,
$$
\omega_1 - \omega_0 = g(x,\xi) \dd \xi \wedge \dd x,
$$
where $g = g(x,\xi)$ is a $C^\infty$-smooth flat function germ at the
origin. But then the corresponding `local' action variables (see
Theorem~\ref{theorem/remaining_cases}) satisfy
$$
I_1(h) = I_0(h) + f(h), \quad f(h) :=
\int^{\varepsilon}_{-\varepsilon}
\dfrac{g((t^2-h)^{1/k},t)}{k(t^2-h)^{\frac{k-1}{k}}} dt.
$$
where $f = f(h)$ is $C^\infty$-smooth on $[-h_0, 0]$ with $h_0$ is
sufficiently small: Indeed, since $g(x,\xi)$ is flat at the origin, we
can write $g = x^N \tilde g(x,\xi)$ with $N$ arbitrary large.  Then we
see that
\[
  f(h) = \frac{1}{k} \int^{\varepsilon}_{-\varepsilon}
  (t^2-h)^{\frac{N+1-k}{k}} g((t^2-h)^{1/k},t) dt
\]
is at least $C^{[\frac{N}{k}] - 1}$-differentiable. Letting
$N \to \infty$, we get that $f = f(h)$ is $C^\infty$-smooth. Hence we
can apply Theorem~\ref{theorem/remaining_cases}.

 \noindent\textbf{Step 1.2.}  To conclude the proof in the case when $k$ is odd or $k = 2\ell$ is even with $H = \xi^2 - x^{2\ell}$, it is therefore left to show the 
 uniqueness of the Taylor series of $f_i$ up to changing the sign of
 $\sum_{i=1}^{\ell-1} x^{2i} f_{2i}(x^{2\ell})$ when $k = 2\ell$ is
 even.  This follows from the proof in the analytic case by virtue of
 Lemma~\ref{lemm:A} and Proposition~\ref{prop/character_moser_type}.

 \noindent\textbf{Step 2.1.} In the remainder of the proof we consider the case of a `compact' $A_{2\ell -1}$ singularity, i.e., the case when $k = 2l$ and $H =  \xi^2 + x^{2l}.$ 
 For simplicity, we shall assume that $H = \xi^2 + x^{4}$ (the proof
 for larger $k = 2 \ell$ is similar).

 Observe that without loss of generality, $g = g(x, \xi^2)$ is an even
 function of $\xi$. Indeed, formula \eqref{eq/solution_even} works
 equally well in the smooth case.  Let
 $$g(x, \xi^2) = g_0(x^4, \xi^2) + xg_1(x^4, \xi^2) + x^2 g_2(x^4,
 \xi^2) + x^3 g_3(x^4, \xi^2).$$ We can similarly assume that
 $g_3 \equiv 0$: letting
 \begin{equation}
   v = \tilde{v_4}(\xi,H(x,\xi)), \quad \mbox{where } \  \tilde{v}(\xi,h) = \frac{-1}{4} \int_{0}^\xi \tilde g_3(h-s^2, s^2) \textup{d}s,
 \end{equation}
 we get that $\{H, v\} = x^3 g_3(x^4, \xi^2).$ Hence we can subtract
 this expression from $g$ using Lemma~\ref{lemm:moser}.

 Now observe that one can find functions
 $c_i = c_i(x^4), 0 \le i \le 2,$ depending on $x$ only and such that
 \begin{equation} \label{eq/generalised_actions}
   \int_{-h^{1/2}}^{h^{1/2}} \frac{c_i(h-\xi^2)}{(h-\xi^2)^{(3-i)/4}}
   \textup{d} \xi = \int_{-h^{1/2}}^{h^{1/2}} \frac{g_i(h-\xi^2,
     \xi^2)}{(h-\xi^2)^{(3-i)/4}} \textup{d} \xi,
 \end{equation}
 which is equivalent to
 $$ \int_{-1}^{1} \frac{c_i(h(1-\xi^2))}{(1-\xi^2)^{(3-i)/4}} \textup{d} \xi = \int_{-1}^{1} \frac{g_i(h(1-\xi^2), h\xi^2)}{(1-\xi^2)^{(3-i)/4}} \textup{d} \xi. $$
 The right-hand-side of the last equation is a smooth function of $h$
 since $g$ is smooth; solving the integral equation produces a smooth
 solution $c_i$ by Lemma~\ref{lemma/integral_Abel}.

 We would now like to show that there exists a smooth solution $u_i$
 to the cohomological equation
 \begin{equation} \label{eq/Moser2} \textup{d}H \wedge \textup{d}u =
   x^i(g_i(x^4, \xi^2) - c_i(x^4)) \textup{d} \xi \wedge \textup{d} x.
 \end{equation}
 Indeed, such an $u_i$ is given by
 \begin{equation} \label{eq/solution2} u_i(x,\xi) =
   \tilde{u_i}(\xi^2+x^k, \xi), \quad \tilde{u_i}(h, \xi) =
   -\frac{1}{4} \int_{-\sqrt{h}}^\xi \dfrac{g_i(h - s^2,s^2) -
     c_i(h-s^2)}{(h-s^2)^{(3-i)/4}} ds,
 \end{equation}
 which can be shown to be smooth by the construction of $c_i$;
 cf. \cite{Kudryavtseva2021}. Indeed, let
 $\alpha(x,\xi^2) = x^i(g_i(x^4,\xi^2) - c_i(x^4))$. Then for an
 arbitrary large $n \in \mathbb N$, there exist $C^\infty$-smooth
 functions $R$ and $v$ such that
 \begin{equation}
   \alpha(x,\xi^2) = \sum_{k = 0}^n B_{k} x^k + x^{n+1} R(x,\xi^2) + \textup{d}H \wedge \textup{d}v.
 \end{equation}
 The proof of this is similar to the one used in the analytic case
 (see Step 2 of the proof of Theorem~\ref{theorem/remaining_cases} for
 details).  Recalling that
 $\alpha(x,\xi^2) = x^i(g_i(x^4,\xi^2) - c_i(x^4))$, we see that in
 fact
 \begin{equation}
   \alpha(x,\xi^2) = \sum_{k = 0}^n B_{k} x^{i+4k} + x^{n+1} R_2(x^4,\xi^2) + \textup{d}H \wedge \textup{d} v_2.
 \end{equation}
 Thus, without loss of generality, we can assume that
 \begin{equation}
   \alpha(x,\xi^2) = \sum_{k = 0}^n B_{k} x^{i+4k} + x^{n+1} R_2(x^4,\xi^2)
 \end{equation}
 since $\textup{d}H \wedge \textup{d}u_2$ does not affect the equality
 of the generalised actions \eqref{eq/generalised_actions} by
 integration by parts:
 $$
 \int_{-h^{1/2}}^{h^{1/2}} \frac{\partial_x u \cdot \partial_\xi H -
   \partial_\xi u \cdot \partial_x H}{-4x^3} \textup{d} \xi =
 \int_{-h^{1/2}}^{h^{1/2}} \textup{d}u((h- \xi^2)^{1/4}, \xi) = 0.
 $$
 We have that $B_k = 0$ must be equal to zero, since
 \begin{equation}
   \int_{-h^{1/2}}^{h^{1/2}} \frac{\alpha(x,s^2)}{(h-\xi^2)^{3/4}} \textup{d} \xi = 0
 \end{equation}
 by construction. Letting $n \to \infty$, we conclude that $u_i$ is
 $C^\infty$-smooth.

 \noindent\textbf{Step 2.2.}  Now observe that $c_0$ and $c_2$ are
 uniquely defined since they correspond to the action variable
 $$I'(h) = 2 \int_{-h^{1/2}}^{h^{1/2}} \frac{g(h-\xi^2, \xi^2)}{4(h-\xi^2)^{3/4}} \textup{d} \xi.$$
 That the Taylor series of $c_1$ is uniquely defined up to sign
 follows from the proof in the analytic case and Lemma~\ref{lemm:A}
 and Proposition~\ref{prop/character_moser_type}.  Indeed, the
 integral equation
 $$ \int_{-h^{1/2}}^{h^{1/2}} \frac{c_i(h-\xi^2)}{(h-\xi^2)^{(k-i-1)/k}} \textup{d} \xi = \int_{-h^{1/2}}^{h^{1/2}} \frac{g_i(h-\xi^2, \xi^2)}{(h-\xi^2)^{(k-i-1)/k}} \textup{d} \xi$$
 leads to the same normal form $c = \sum_{i = 0}^{2}x^{i} c_{i}(x^4)$
 as does the elimination procedure in the analytic case. This follows
 from the fact that
 the transformations
 $\omega \mapsto \omega - \textup{d} a_{ij} u_{ij} \wedge \textup{d}
 H,$ where $a_{ij} \in \mathbb R$ are coefficients and $u_{ij}$ are
 the monomials $u_{ij} = \frac{1}{2 (i+1) }x^{i+1} \xi^{2j-1},$ do not
 change the `generalized' actions
  $$\int_{-h^{1/2}}^{h^{1/2}} \frac{g_i(h-\xi^2, \xi^2)}{(h-\xi^2)^{(k-i-1)/k}} \textup{d} \xi$$
  (again by integration by parts). It is only left to observe that we
  can change sign of $c_1(x^4)$ by applying the diffeomorphism
  $\textup{Inv}(x,\xi) = (-x,-\xi)$ and that changing $c_1 = c_1(x^4)$
  by a flat function does not affect the smoothness of the solution
  $u_1$. This shows that the only symplectic invariants are indeed the
  Taylor series of $c_1$ at the origin taken up to sign and the germs
  of $c_0$ and $c_2$. Thus, we can transform the
  symplectic structure to the form
 $$
 \psi^*\omega = \dd (f \textup{d} \xi), \ f = \sum_{i=1}^{3} x^i
 f_i(x^4),
  $$
  where $f'(x) = \sum_{i=0}^{2} x^i c_i(x^4)$ and $f_i$ satisfy the
  uniqueness conditions given in the theorem.
  To conclude the proof, it is left to observe that $f_i$ with these uniqueness conditions
  are indeed symplectic invariants, that is, two pairs 
  $(H, \omega)$ and $(H, \tilde \omega)$
  are related by an $H$-preserving diffeomorphism if and only if the corresponding invariants coincide. We have already proven the
  only if statement. The if statement is shown similarly to Step 2.1: The cohomological equations now
  read
   \begin{equation} \textup{d}H \wedge \textup{d}u =
   x^i(c_i(x^4) - \tilde c_i(x^4)) \textup{d} \xi \wedge \textup{d} x, \quad i = 0,1,2,
 \end{equation} 
 where $c_i(x^4) - \tilde c_i(x^4) = 0$ for $i = 0,2$ and $c_1(x^4) - \tilde c_1(x^4)$ is flat at the origin.
 For $i = 0,2,$ the equations are thus solved trivially, whereas for $i = 1,$ the formula
  \begin{equation}  u_1(x,\xi) =
   \tilde{u_1}(\xi^2+x^k, \xi), \quad \tilde{u_1}(h, \xi) =
   -\frac{1}{4} \int_{-\sqrt{h}}^\xi \dfrac{c_1(h-s^2) - \tilde c_1(h-s^2)}{(h-s^2)^{1/2}} ds,
 \end{equation}
 gives as $C^\infty$-smooth solution since $c_1(x^4) - \tilde c_1(x^4)$ is flat at the origin.
\end{proof}

One can show using the same argument that an $A_{k-1}, k \ge 2,$
singularity has also an $H$-preserving normal form as in the following

\begin{theo}
  \label{theo:classical-smooth_H}
  Let $H$ be a $C^\infty$-smooth Hamiltonian on $(\RM^2, \omega)$
having $A_{k-1}, k \ge 2,$ singularity at the origin $O$.
 Then the pair $(H, \omega)$ has also the following local symplectic normal form near $O$ for the group of
  $H$-preserving $C^\infty$-smooth diffeomorphism germs:
  
  \begin{equation}
    \big(H = \xi^2 \pm x^{k}, \ \omega = \sum_{i=0}^{k-2} x^i c_i(H) \textup{d} \xi
  \wedge \textup{d} x \big)
  \end{equation}
  for some $C^\infty$-smooth  functions $c_i$  defined  uniquely modulo the following relations:
  \begin{itemize}
  \item if $k = 2\ell + 1$ is odd, then $c_{i}$'s are uniquely defined up to addition of flat functions (that is,
  the Taylor series of $c_{i}$'s form a complete and minimal set of symplectic invariants);
  \item if $k = 2\ell$ is even and $H = \xi^2 - x^{2\ell}$, then $c_{i}$'s are uniquely defined up to addition of flat functions 
  and changing the
    sign of $\sum_{i=0}^{\ell-2} x^{2i+1} c_{2i+1}(H)$;
  \item if $k = 2\ell$ is even and $H = \xi^2 + x^{2\ell}$, then the odd-indexed functions $c_{2i+1}, 0 \le i \le \ell-2,$
    are uniquely defined up to addition of flat functions and changing the sign of
    $\sum_{i=0}^{\ell-2} x^{2i+1} c_{2i+1}(H)$.
  \end{itemize}
\end{theo}
\begin{proof}
  The proof is completely parallel to that of
  Theorem~\ref{theo:classical-smooth}.
\end{proof}
Using this normal form, one can deduce a similar fibration-preserving
classification. Specifically, we have the following result.
\begin{theo}
  \label{theo:classical-smooth_fibration_preserving_case}
  Let $H$ be a $C^\infty$-smooth Hamiltonian on $(\RM^2, \omega)$
having $A_{k-1}, k \ge 2,$ singularity at the origin $O$.
  Then under the fibration-preserving equivalence, the pair $(H, \omega)$ has the following local symplectic $C^\infty$-smooth normal form:
  \begin{equation}
    \big(H = \xi^2 \pm x^{k}, \ \omega = (1+\sum_{i=1}^{k-2} x^i c_i(H)) \textup{d} \xi
  \wedge \textup{d} x \big)
  \end{equation}
  for some $C^\infty$-smooth  functions $c_i$  defined  uniquely modulo the following relations:
  \begin{itemize}
  \item if $k = 2\ell + 1$ is odd, then $c_{i}$'s are uniquely defined up to addition of flat functions (that is,
  the Taylor series of $c_{i}$'s form a complete and minimal set of symplectic invariants);
  \item if $k = 2\ell$ is even and $H = \xi^2 - x^{2\ell}$, then $c_{i}$'s are uniquely defined up to addition of flat functions 
  and changing the
    sign of $\sum_{i=0}^{\ell-2} x^{2i+1} c_{2i+1}(H)$;
  \item if $k = 2\ell$ is even and $H = \xi^2 + x^{2\ell}$, then the odd-indexed functions $c_{2i+1}, 0 \le i \le \ell-2,$
    are uniquely defined up to addition of flat functions and changing the sign of
    $\sum_{i=0}^{\ell-2} x^{2i+1} c_{2i+1}(H)$.
  \end{itemize}
\end{theo}
\begin{proof}
First consider the case when the leaves of the foliation given by $H$ are connected, that is,
when $k$ is odd or $k = 2\ell$ is even and $H = \xi^2 + x^{2\ell}.$  Let
$\tilde H$ be a Hamiltonian  defining the same (singular) foliation as $H.$ Then
$\tilde H = Hg(H)$ for a $C^\infty$-smooth germ $g$ with $g(0) \ne 0.$ Without loss of generality,
we can assume that $g(0) > 0,$ since otherwise there exists no diffeomorphism $\psi$ such that 
$H = \tilde H \circ \psi.$ Now observe 
that there is a unique function $g(H)$ with $g(0) > 0$ such that
$c_0(Hg(H)) = 1$. Indeed, the same proof as the one given in
Theorem~\ref{theo:classical-analytic_fibration_preserving_case} works
equally well in the $C^\infty$-smooth category.

Now consider the case when the leaves of the foliation 
induced by $H$ are not connected, that is, when $H = \xi^2 - x^{2\ell}.$ Then the same proof allows us to simplify the
$H$-preserving normal form to 
\begin{equation}
\label{eq:normal-2l}
  \big(H = \xi^2 - x^{2 \ell}, \ \omega = (1+\sum_{i=1}^{2\ell-2} x^i c_i(H)) \textup{d} \xi
  \wedge \textup{d} x \big),
  \end{equation}
  where $c_{i}$'s can be modified by arbitrary flat functions. Since
  the leaves of the level sets $H = h$ are not connected in this case,
  it is a priori possible to simplify $c_{i}$'s further by allowing
  Hamiltonians $\tilde H$ that define the same (singular) foliation as
  $H$, but cannot be written in the form $\tilde H = H g(H)$ for a
  $C^\infty$-smooth germ $g = g(H)$.  However, we claim that this does
  not happen. Indeed, the formula $\tilde H = H g(H)$ still holds {\em
    formally} at the level of Taylor series: for every
  $C^\infty$-smooth Hamiltonian $\tilde H$ defining the same
  (singular) foliation as $H$, there exists a $C^\infty$-smooth germ
  $g = g(H)$ with $g(0) \ne 0$ such that the Taylor series of
  $\tilde H$ and $H g(H)$ at the origin coincide. In view
  of~\eqref{equ:hat_c_i}, this means that the Taylor series of
  $c_{i}$'s remain invariant.  The result follows.
\end{proof}

\section{Discussion}
\label{sec/discussion}

In this work we have given a complete symplectic classification of
$A_k$ singularities in the $C^\infty$-smooth and real-analytic
categories. We have showed in particular that for
$A_{2\ell -1}, \ell > 1,$ singularities given by a Hamiltonian
function $H = \xi^2 + x^{2\ell}$ the list of symplectic invariants is
given by a number of function germs and Taylor series invariants,
where the Taylor series invariants are `invisible' to the action
variables. In all other cases the symplectic invariants are given by
Taylor series in both $C^\infty$-smooth and analytic categories, and
the action variables are sufficient for the symplectic classification.
One advantage of our proofs of these results is that the normal form derivation
is explicit and can be applied to all specific integrable models with $A_k$ singularities,
in a similar way this is done in \cite{Francoise2013} for the non-degenerate case.
Note however that we do not make any claims as to the computational
effectiveness of our algorithm.

\begin{figure}[htbp]
  \begin{center}
    \includegraphics[width=0.62\linewidth]{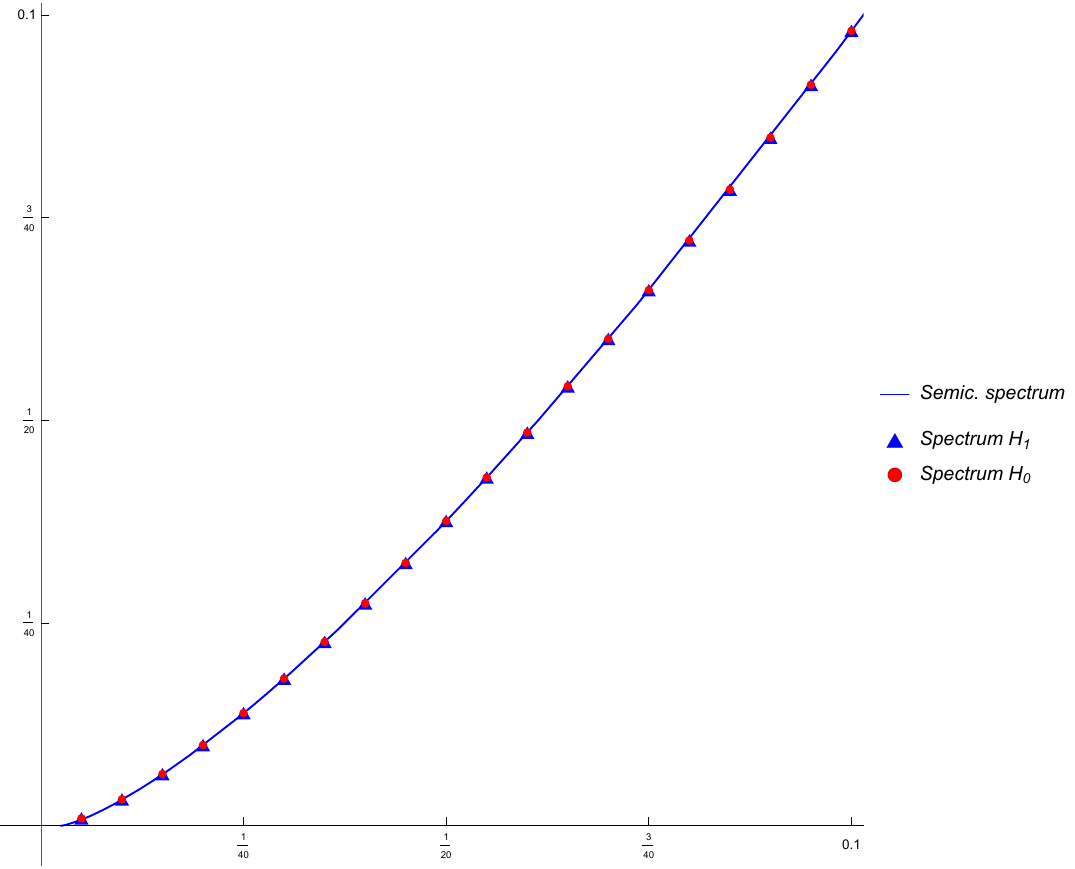}
    \caption{Spectra of the Hamiltonians $\widehat{H}_0 = - \hbar^2 \frac{\partial^2}{\partial x^2} + x^4$ (red dots) and 
    $\widehat{H}_1 = - \hbar^2 \frac{\partial^2}{\partial x^2} + V_1(x)$ (blue triangles)
    and their semi-classical approximation (blue curve). The constant $c = \frac{1}{16}$ and $\hbar = \frac{1}{200}$ in this case.}
    \label{Spectrum_A4}
  \end{center}
\end{figure}

Finally, we would like to note that the classification makes the compact $A_{2\ell -1}, \ell > 1,$ singularities
especially interesting from the view-point of the inverse spectral
problem of integrable systems. At present, we do not know if such
singularities are spectrally determined (the case of other $A_k$
singularities is simpler since then the action variables alone are the
only symplectic invariants). As an example, consider again
two Hamiltonian systems
$$
\big(H = \xi^2 + x^{2\ell},\, \omega_0 = \dd \xi \wedge \dd x\big)
\quad \mbox{ and } \quad \big(H = \xi^2 + x^{2\ell},\, \omega_1 =
(1+xc_1(x^{2\ell}))\dd \xi \wedge \dd x\big),
$$
where $\ell \ge 2$ and $c_1$ is a smooth function whose Taylor
series at the origin does not vanish identically. For simplicity, let $\ell = 2$ and $c_1(x^4) \equiv c.$
Performing the 
change of variables $\tilde x = x+ \frac{c x^2}{2}$ 
brings the symplectic structure $\omega_1$ to the canonical form, while the potential
becomes 
$V_1(\tilde x) = x(\tilde x)^{4} = \frac{(-1+\sqrt{1 + 2 c \tilde x})^4}{c^4}.
$
We thus get another pair of Hamiltonian systems 
$$
(H_0 = \xi^2 + x^{2\ell},\, \dd \xi \wedge \dd x) \quad \mbox{ and }
\quad (H_1 = \xi^2 + V_1(x),\, \dd \xi \wedge \dd {x}),
$$
where the potential 
\begin{equation} \label{potential_V1} 
V_1(x) = \frac{(-1+\sqrt{1 + 2 c x})^4}{c^4}.
\end{equation}
By the construction, these systems have identical action variables (whenever the energy $h < \frac{1}{c^4}$),
but are not fiberwise symplectomorphic near the origin. By the Bohr-Sommerfeld rules, the spectra of the corresponding quantised Hamiltonians 
$$
\widehat{H}_0 = - \hbar^2 \frac{\partial^2}{\partial x^2} + x^4 \quad \mbox{ and } \quad \widehat{H}_1 = - \hbar^2 \frac{\partial^2}{\partial x^2} + V_1(x),
$$
where $V_1$ is as in \eqref{potential_V1}, differ by a quantity of size $O(\hbar^2)$ (for small energies).
However, these spectra do not coincide; see Figure~\ref{Spectrum_A4}. Even more, one can show that the knowledge of the
spectrum of a Hamiltonian $\widehat H = - \hbar^2 \frac{\partial^2}{\partial x^2} + V(x)$ up to $O(\hbar^3)$ allows one to completely reconstruct the potential $V$ provided
that it is analytic or satisfies a mild regularity assumption; see \cite{CdV2011}. The implication of this
result is that
the
spectrum of a quantum Hamiltonian with an $A_k$ singularity at the origin contains more information than the action variables alone.
We conjecture that it allows one to reconstruct all of the symplectic invariants constructed in Sections~\ref{sec/classification_analytic} and
\ref{sec/classification_smooth} (that is, the function $c_1 = c_1(x^{2\ell})$ in the above example).

\appendix

\section{Proof of Theorem~\ref{theorem/remaining_cases}}

In this section, we prove Theorem~\ref{theorem/remaining_cases}.  For
convenience, let us recall the statement of this theorem here.

\begin{theo} 
  Let $H = \xi^2 - x^k$ with $k \ge 2$ and $\omega_0$ and $\omega_1$
  be arbitrary symplectic forms near $0\in\RM^2$.  Assume that the
  corresponding local action variables are equal up to a
  $C^\infty$-smooth function $f$:
$$
I_0(h) := \int\limits_{R(h)} \omega_0 = I_1(h) - f(h) :=
\int\limits_{R(h)} \omega_1 - f(h),
$$
where the region $R(h)$ is enclosed by the level curves $H = 0,$
$x = (\xi^2 - h)^{1/k}, \ h \le 0,$ and $\xi = \pm \varepsilon;$ see
Figure~\ref{Action_vars}.  Then there exists a local $C^\infty$-smooth
$H$-preserving isotopy $\psi_t$ of Moser type, such that
\[
  \psi_1^* \omega_1 = \omega_0\,.
\]
\end{theo}
\begin{proof}

  \mbox{ }

  \noindent\textbf{Step 1.} Write $\omega_1 - \omega_0$ as
  $\omega_1 - \omega_0 = g(x,\xi) \dd \xi \wedge \dd x$ for some
  smooth germ $g = g(x,\xi)$. By assumption,
  \[
    I(h) = \int\limits_{R(h)} g(x,\xi) \dd \xi \wedge \dd x = f(h)
  \]
  and hence
  \[
    f'(h) = \int_{-\varepsilon}^{\varepsilon} \frac{g((\xi^2 -
      h)^{\frac{1}{k}},\xi)}{k (\xi^2 - h)^{\frac{k-1}{k}}} \dd \xi
  \]
  are $C^\infty$-smooth on $[-h_0, 0]$ with $h_0 > 0$ sufficiently
  small. We shall deduce from this that
  \begin{equation} \label{eq/cohomological} g(x,\xi) \dd \xi \wedge
    \dd x = \dd H \wedge \dd u,
  \end{equation} for some smooth $u$, which will give the result
  via Moser's trick (see Lemma~\ref{lemm:moser}). 

  Observe that on the set $R(h_0)$, our cohomological equation
  $g(x) \dd \xi \wedge \dd x = \dd H \wedge \dd u$ admits a solution
  \begin{equation} \label{eq/defn_u} u(x,\xi) = \tilde{u} (\xi, \xi^2
    - x^k), \quad \mbox{ where } \quad \tilde u(\xi,h) = \int^\xi_{-\varepsilon}
    \dfrac{g((t^2-h)^{1/k},t)}{k(t^2-h)^{\frac{k-1}{k}}} dt.
  \end{equation}
  Let us first show that the thus defined $u$ is $C^\infty$-smooth
  (more precisely, admits a $C^\infty$-smooth extension to a small
  open neighbourhood containing the set $R(h_0)$).

 \noindent\textbf{Step 2.} 
 To show that $u$ defined in \eqref{eq/defn_u} is $C^\infty$-smooth,
 we first observe that
$$g = \sum_{i}^{n-1} d_i x^{i} + R(x,\xi^2)x^{n} + \dd v \wedge \dd H,$$
where $v = v(x,\xi)$ is smooth and $d_i = 0$ when
$i \equiv k-1 \mod k$. Indeed, first observe that without loss of
generality, $g = g(x, \xi^2)$ is an even function of $\xi$, for
formula \eqref{eq/solution_even} works equally well in the smooth
case.  Next consider the Taylor expansion
\begin{equation} \label{eq/Taylor_expansion_1} g(x,\xi^2) = \sum_{i,j
    = 0}^n A_{ij} x^i \xi^{2j} + \xi^{2n+2} \sum_{i = 0}^n x^i
  r_i(\xi^2) + x^{n+1} R_1(x,\xi^2),
\end{equation} 
where the integer $n$ is arbitrary large and the functions $r_k,$ and
$R_1$ are $C^\infty$-smooth functions of their variables. Exactly as
in the analytic case discussed in Section~\ref{sec/classification_analytic}, summing up
monomials $\frac{1}{2 (k+1) }x^{k+1} \xi^{2j-1}$ with appropriate
coefficients, we can get rid of the variable $\xi$ from the first sum
of the expansion \eqref{eq/Taylor_expansion_1}.  Therefore,
\begin{equation}
  g(x,\xi^2) = \sum_{i = 0}^n d_{i} x^i + 
  \xi^{2n+2}  \sum_{i = 0}^n x^i r_i(\xi^2)  + x^{n+1} R_2(x,\xi^2) + \textup{d}u_2 \wedge \textup{d}H.
\end{equation}
for some smooth functions $R_2$ and $u_2$.  Similarly, an
appropriately chosen sum of functions of the form
$v_{kn} = x^{k+1} \xi^{2n+1} q_k(\xi^2),$ gives us that
\begin{equation}
  g(x,\xi^2) = \sum_{i = 0}^n d_{i} x^i + x^{n+1} R_3(x,\xi^2) + \textup{d}u_3 \wedge \textup{d}H
\end{equation}
for some smooth functions $R_3$ and $u_3$ as required. Finally, using
the explicit formula
\begin{equation}
  u_4 = \tilde{u_4}(\xi,H(x,\xi)), \quad \mbox{where } \quad  \tilde{u_4}(\xi,h) = \frac{1}{k} \int_{0}^\xi \tilde g_k(h-s^2, s) \textup{d}s,
\end{equation}
we can subtract an arbitrary smooth function of the form
$x^{k-1} g_k(x^k, \xi)$ from $g$.  In particular, we can achieve that
$d_i = 0$ for $i \equiv k-1 \mod k.$ Thus, without loss of generality,
\begin{equation} \label{eq/normal_g} g = \sum_{i}^{n-1} d_i x^{i} +
  R(x,\xi)x^{n}, \quad \mbox{ where } \ d_i = 0 \ \mbox{ for } \ i
  \equiv k-1 \mod k.
\end{equation}
\noindent\textbf{Step 3.}
Because of Eq.~\eqref{eq/normal_g}, we can now write
$\tilde{u}(\xi,h)$ as
\[
  k \tilde{u}(\xi,h) = \int^\xi_{-\varepsilon} \sum_i d_i
  (t^2-h)^{\frac{i+1-k}{k}} dt+ \int^\xi_{-\varepsilon}
  R((t^2-h)^{1/k},t^2)(t^2-h)^{\frac{n+1-k}{k}} dt.
\]
Observe that we can write the derivative $k f'(h)$ as
\[
  k f'(h) = \int^{\varepsilon} _{-\varepsilon} \sum_i d_i
  (t^2-h)^{\frac{i+1-k}{k}} dt+ \int^{\varepsilon} _{-\varepsilon}
  R((t^2-h)^{1/k},t^2)(t^2-h)^{\frac{n+1-k}{k}} dt,
\]
which is $C^\infty$-smooth by the assumption of the theorem. But the
integral
$$
\int^{\varepsilon} _{-\varepsilon} (t^2-h)^{\frac{i+1-k}{k}} dt
$$
is not a smooth function of $h$ for $i \not\equiv k-1 \mod k$ since
its $h$-expansion has the form
$$
\int^{\varepsilon} _{-\varepsilon} (t^2-h)^{\frac{i+1-k}{k}} dt =
c(i,k) (-h)^{\frac{i+1-k}{k} + 1/2} + s(h),
$$
where $s$ is real-analytic and $c(i,k)$ is a constant which is
non-zero for $i \not\equiv k-1 \mod k$.

[Indeed, we can write
$$\int^{\varepsilon} _{-\varepsilon}  (t^2-h)^{\frac{i+1-k}{k}} dt = (-h)^{\frac{i+1-k}{k} + 1/2} \int^{\varepsilon/\sqrt{-h} } _{-\varepsilon/\sqrt{-h} }  (1+s^2)^{\frac{i+1-k}{k}} ds.$$
Differentiating the integral
$ \int^{\varepsilon/\sqrt{-h} }_{-\varepsilon/\sqrt{-h} }
(1+s^2)^{\frac{i+1-k}{k}} ds $ with respect to $h$ and then
integrating yields
$$
\int^{\varepsilon/\sqrt{-h} } _{-\varepsilon/\sqrt{-h} }
(1+s^2)^{\frac{i+1-k}{k}} ds = (-h)^{\frac{k-i-1}{k} - 1/2} s(h) +
c(i,k),
$$
where $s$ is $C^\infty$-smooth (even real-analytic) and $c(i,k)$ is a
constant. Hence
$$\int^{\varepsilon} _{-\varepsilon}  (t^2-h)^{\frac{i+1-k}{k}} dt = c(i,k) (-h)^{\frac{i+1-k}{k} + 1/2} + s(h).$$
To show that the coefficient $c(i,k) \ne 0$ for
$i \not\equiv k-1 \mod k$, it suffices to differentiate the expression
$$(-h)^{\frac{i+1-k}{k} + 1/2} \int^{\varepsilon/\sqrt{-h} } _{-\varepsilon/\sqrt{-h} }  (1+s^2)^{\frac{i+1-k}{k}} ds$$
with respect to $h$: If $c(i,k)$ is zero, then it must be smooth. On
the other hand, the derivative is then
$$\varepsilon  (-h)^{\frac{i+1-k}{k} - 1} (1-\varepsilon^2/h)^{\frac{i+1-k}{k}} - \frac{i+1-k/2}{k} \frac{s(h)}{h},$$
which is not $C^\infty$-smooth for $i \not\equiv k-1 \mod k$. This
contradiction shows $c(i,k) \ne 0$.]

This proves that all the coefficients $d_i = 0$ when
$[\frac{i+1}{k}] < [\frac{n+1}{k}]$, so that $\tilde{u}(\xi,h)$ is
continuously differentiable at least $[\frac{n+1}{k}]-1$ times. Hence
$u$ is also continuously differentiable at least $[\frac{n+1}{k}]-1$
times. Letting $n \to \infty$, we conclude that $u$ is
$C^\infty$-smooth.

\noindent\textbf{Step 4.} In case when $k$ is odd, the proof of the theorem is complete, since then the function $u$ given by
\eqref{eq/defn_u} is well defined for both positive and negative
values of $h$ (exactly the same argument as in Step 3 shows that $u$
is $C^\infty$-smooth for all $h$ sufficiently close to zero).
 
In case when $k$ is even, Step 3 gives a $C^\infty$-smooth function
$u$, which can by virtue of the formulas
 $$
 u = \tilde u(\xi,H) \quad \mbox{and} \quad \tilde u(\xi,h) =
 \int^\xi_{-\varepsilon}
 \dfrac{g((t^2-h)^{1/k},t)}{k(t^2-h)^{\frac{k-1}{k}}} dt =
 \int^\xi_{-\varepsilon} \dfrac{g(|t^2-h|^{1/k},t)}{k
   |t^2-h|^{\frac{k-1}{k}}} dt, \quad h \le 0,
 $$
 be extended to a small open neighbourhood of $R(h_0)$. Moreover, the
 function $u$ solves the cohomological equation
 \eqref{eq/cohomological} on the set $R(h_0)$.  To obtain a global
 solution to the cohomological equation \eqref{eq/cohomological}, we
 extend the definition of $u$ from the domain $R(h_0)$ to a full
 tubular neighbourhood of the singular fiber $H = \xi^2 - x^k = 0$ as
 follows: Setting $\xi = \pm \varepsilon, \ \varepsilon > 0,$ gives
 four Lagrangian sections of the fibration given by $H$; see
 Figure~\ref{Quartic_minus}.
   \begin{figure}[htbp]
   \begin{center}
     \includegraphics[width=0.48\linewidth]{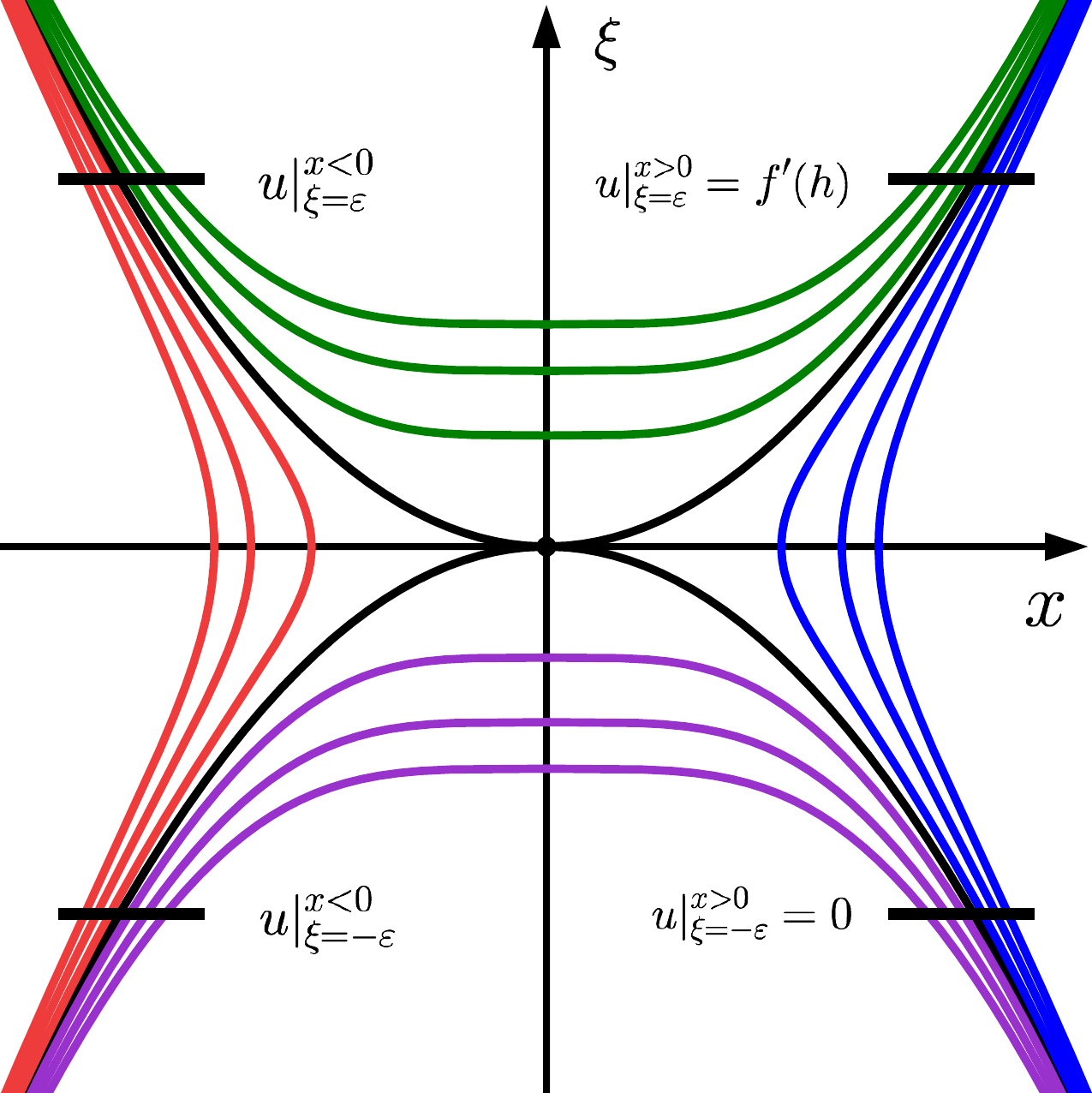}
     \caption{Level sets of $A_{3}$ singularity $H = \xi^2 -
       x^{4}$. The four Lagrangian sections $\xi = \pm \varepsilon$,
       $x \lessgtr 0$ are shown in black. The four parts of the
       fibration, corresponding to the four functions $v_i,$ are shown
       in colour. }
     \label{Quartic_minus}
   \end{center}
 \end{figure}
 Integrating $g(x,\xi)$ along the Hamiltonian flow of
 $$\dot{x} = H_\xi, \quad \dot \xi = - H_x$$
 between these Lagrangian sections (i.e. along the characteristics of
 the linear PDE $\{H, u\} = g(x,\xi)$), produces 4 functions $v_i$
 given explicitly as follows:
 \begin{align}
   v_1(\xi, h) \equiv \tilde u(\xi, h) = \int_{-\varepsilon}^{\xi} \frac{g((t^2 - h)^{\frac{1}{k}},t)}{-k (t^2 - h)^{\frac{k-1}{k}}} \dd t, \quad    v_3(\xi, h) = \int_{\varepsilon}^{\xi} 
   \frac{g(-(t^2 - h)^{\frac{1}{k}},t)}{k (t^2 - h)^{\frac{k-1}{k}}} \dd t
 \end{align}
 and
 \begin{align}
   v_2(x, h) = \int_{(\varepsilon^2-h)^{1/k}}^{x} \frac{g(s, \sqrt{h+s^k})}{2 \sqrt{h+s^k}} \dd s, 
   \quad 
   v_4(x,h) = \int_{-(\varepsilon^2-h)^{1/k}}^{x} \frac{g(s, -\sqrt{h+s^k})}{-2 \sqrt{h+s^k}} \dd s.
 \end{align}
 The global solution $u$ is now defined by the following rule:
 \begin{itemize}
 \item[$\bullet$] If $(x,\xi)$ is such that $x \ge 0$ and
   $H(x,\xi) \le 0$, then $u(x,\xi) = \tilde u(\xi, H) = v_1(\xi, H) $
   as above;
 \item[$\bullet$] If $(x,\xi)$ is such that $\xi \ge 0$ and
   $H(x,\xi) \ge 0$, we set $u(x,\xi) = h_1(H) + v_2(x, H),$ where
   $h_1(H)$ is any $C^\infty$-smooth function that coincides with
   $v_1(\varepsilon, H) \equiv f'(H)$ on the Lagrangian
   section $\xi = \varepsilon, x > 0$;
 \item[$\bullet$] Next, if $(x,\xi)$ is such that $x \le 0$ and
   $H(x,\xi) \le 0$, we set
   $u(x,\xi) = h_1(H) + h_2(H) + v_3(\xi, H),$ where $h_2(H)$ is any
   $C^\infty$-smooth function  that coincides with
   $v_2(-(\varepsilon^2-H)^{1/k}, H)$ on the Lagrangian section
   $\xi = \varepsilon, x < 0$;
 \item[$\bullet$] Finally, if $(x,\xi)$ is such that $\xi \le 0$ and
   $H(x,\xi) \ge 0$, we set
   $u(x,\xi) = h_1(H) + h_2(H) + h_3(H) + v_4(x, H),$ where $h_3(H)$
   is any $C^\infty$-smooth function that coincides with
   $v_3(-\varepsilon, H)$ on the Lagrangian section
   $\xi = - \varepsilon, x < 0$.
 \end{itemize}
 Observe that by the construction, we thus have $h_1(H) \equiv f'(H)$
 when $H \le 0$. Moreover,
$$h_1(0) + h_2(0) + h_3(0) + h_4(0) = 0, \quad \mbox{where} \quad h_4(0) = v_4(\varepsilon^{2/k}, 0),$$ 
since in this case we integrate twice along each of the 4 branches of
$\xi^2 - x^{2\ell} = 0, \, k = 2 \ell,$ with opposite
orientations. This shows that $u = u(x,\xi)$ is a well-defined and
continuous function in a neighbourhood of the origin.  Proceeding
exactly as in Step 3, one can show that the functions $v_1(\xi, H)$
and $v_3(\xi, H)$ and hence also $h_3(H)$ are
$C^\infty$-smooth. Writing
\begin{align}
  v_2(x(h,\xi), h) = \int_{\varepsilon}^{\sqrt{h}} \frac{g((t^2 - h)^{\frac{1}{k}},t)}{k (t^2 - h)^{\frac{k-1}{k}}} \dd t +
  \int_{\sqrt{h}}^{\xi} 
  \frac{g((t^2 - h)^{\frac{1}{k}},t)}{k (t^2 - h)^{\frac{k-1}{k}}} \dd t,
\end{align}
we see that also $v_2(\xi, H)$ are hence $h_2(H)$ are
$C^\infty$-smooth. Indeed, we have already shown that in the
preliminary normal form \eqref{eq/normal_g}, all of the coefficients
$d_i = 0$ when $[\frac{i+1}{k}] < [\frac{n+1}{k}]$, which shows that
these functions are at least $[\frac{n+1}{k}]-1$ times continuously
differentiable.  But the exponent $n$ is arbitrary large, which shows
that these functions are in fact $C^\infty$-smooth.

A similar argument shows the smoothness of $v_4(\xi, H)$.  We therefore get that $u$, being a sum of $v_i$ and $h_j$ on the closures of each
connected component of $H^{-1}([-h_0, h_0]) \setminus H^{-1}(0)$, is
$C^\infty$-smooth on each of these individual closures. 
Moreover, the vanishing of the coefficients
$d_i = 0$ in \eqref{eq/normal_g} implies that the corresponding derivatives  match on 
$H^{-1}(0).$ It follows that $u$ is
$C^\infty$-smooth.
\end{proof}

\bibliographystyle{abbrv}
\bibliography{./library}

\begin{thebibliography}{10}

\bibitem{Abel1826}
N.~Abel.
\newblock Auflösung einer mechanischen aufgabe.
\newblock {\em Journal für die reine und angewandte Mathematik}, 1:153--157,
  1826.

\bibitem{Alonso2020}
J.~Alonso, H.~Dullin, and S.~Hohloch.
\newblock Symplectic classification of coupled angular momenta.
\newblock {\em Nonlinearity}, 33(1):417, 2019.

\bibitem{Bolsinov2004}
A.~Bolsinov and A.~Fomenko.
\newblock {\em Integrable {H}amiltonian {S}ystems: {G}eometry, {T}opology,
  {C}lassification}.
\newblock CRC Press, 2004.

\bibitem{Bolsinov2018}
A.~Bolsinov, L.~Guglielmi, and E.~Kudryavtseva.
\newblock Symplectic invariants for parabolic orbits and cusp singularities of
  integrable systems.
\newblock {\em Phi. Trans. R. Soc. A.}, 376:20170424, 2018.

\bibitem{Bolsinov2019}
A.~Bolsinov and A.~Izosimov.
\newblock Smooth invariants of focus-focus singularities and obstructions to
  product decomposition.
\newblock {\em Journal of Symplectic Geometry}, 17(6):1613--1648, 2019.

\bibitem{Bolsinov2018_2}
A.~Bolsinov, V.~Matveev, E.~Miranda, and S.~Tabachnikov.
\newblock Open problems, questions and challenges in finite-dimensional
  integrable systems.
\newblock {\em Phi. Trans. R. Soc. A.}, 376:20170430, 2018.

\bibitem{CdV2003}
Y.~Colin~de Verdi{\`e}re.
\newblock Singular {L}agrangian manifolds and semiclassical analysis.
\newblock {\em Duke Math. J.}, 116(2):263--298, 2003.

\bibitem{CdV2011}
Y.~{Colin de Verdi{\`e}re}.
\newblock A semi-classical inverse problem {II}: reconstruction of the
  potential.
\newblock In {\em Geometric aspects of analysis and mechanics}, volume 292 of
  {\em Progr. Math.}, pages 97--119. Birkh{\"a}user/Springer, New York, 2011.

\bibitem{DeVerdiere1979}
Y.~{Colin de Verdiere} and J.~Vey.
\newblock Le lemme de {M}orse isochore.
\newblock {\em Topology}, 18(4):283 -- 293, 1979.

\bibitem{Cushman2015}
R.~H. Cushman and L.~M. Bates.
\newblock {\em Global aspects of classical integrable systems}.
\newblock Birkh{\"a}user, 2 edition, 2015.

\bibitem{Delzant1988}
T.~Delzant.
\newblock Hamiltoniens p{\'e}riodiques et images convexes de l'application
  moment.
\newblock {\em Bulletin de la Soci{\'e}t{\'e} math{\'e}matique de France},
  116(3):315--339, 1988.

\bibitem{Dhont2008}
G.~Dhont and B.~Zhilinski\'{\i}.
\newblock Classical and quantum fold catastrophe in the presence of axial
  symmetry.
\newblock {\em Phys. Rev. A}, 78:052117, 2008.

\bibitem{Duistermaat1980}
J.~J. Duistermaat.
\newblock On global action-angle coordinates.
\newblock {\em Communications on Pure and Applied Mathematics}, 33(6):687--706,
  1980.

\bibitem{Efstathiou2012}
K.~Efstathiou and A.~Giacobbe.
\newblock The topology associated with cusp singular points.
\newblock {\em Nonlinearity}, 25(12):3409--3422, 2012.

\bibitem{Eliasson1984}
L.~H. Eliasson.
\newblock {\em Hamiltonian systems with {P}oisson commuting integrals}.
\newblock PhD thesis, Stockholm University, 1984.

\bibitem{Eliasson1990}
L.~H. Eliasson.
\newblock Normal forms for {H}amiltonian systems with {P}oisson commuting
  integrals --- elliptic case.
\newblock {\em Commentarii Mathematici Helvetici}, 65:4--35, 1990.

\bibitem{Francoise1978}
J.-P. Fran\c{c}oise.
\newblock Mod\`ele local simultan\'e d'une fonction et d'une forme de volume.
\newblock {\em Journ\'ees singuli\`eres de Dijon, Ast\'erisque},
  59--60:119--130, 1978.

\bibitem{Francoise1988}
J.-P. Fran\c{c}oise.
\newblock Relative cohomology and volume forms.
\newblock {\em Banach Center Publications}, 20(1):207--222, 1988.

\bibitem{Francoise1990}
J.-P. Fran\c{c}oise.
\newblock {\em Int\'egrales de p\'eriodes en g\'eom\'etries symplectique et
  isochore}.
\newblock G\'eom\'etrie Symplectique et M\'ecanique (La Grande Motte, 1988).
  Volume 1416 of Lecture notes in Mathematics. Springer Berlin Heidelberg,
  1990.

\bibitem{Francoise2013}
J.-P. Fran\c{c}oise, P.~L. Garrido, and G.~Gallavotti.
\newblock {Rigid motions: {A}ction-angles, relative cohomology and polynomials
  with roots on the unit circle}.
\newblock {\em J. Math. Phys.}, 54(3):032901, 2013.

\bibitem{Garay2005}
M.~D. Garay.
\newblock Stable moment mappings and singular {L}agrangian foliations.
\newblock {\em Quart. J. Math}, 56:357--366, 2005.

\bibitem{Guglielmi2018}
L.~Guglielmi.
\newblock {\em Symplectic invariants of integrable {H}amiltonian systems with
  singularities}.
\newblock PhD thesis, SISSA, 2018.

\bibitem{Hohloch2021}
S.~Hohloch and J.~Palmer.
\newblock Extending compact {H}amiltonian ${S^1}$-spaces to integrable systems
  with mild degeneracies in dimension four.
\newblock {\em arXiv:2105.00523}, 2021.

\bibitem{Izosimov2011b}
A.~Izosimov.
\newblock Classification of almost toric singularities of {L}agrangian
  foliations.
\newblock {\em Sbornik: Mathematics}, 202(7):1021--1042, 2011.

\bibitem{Kudryavtseva2020}
E.~Kudryavtseva.
\newblock Hidden toric symmetry and structural stability of singularities in
  integrable systems.
\newblock {\em Europ.\ J.\ Math.}, 8:1487--1549, 2022.

\bibitem{Kudryavtseva2021}
E.~Kudryavtseva and N.~Martynchuk.
\newblock ${C}^\infty$ symplectic invariants of parabolic orbits and flaps in
  integrable {H}amiltonian systems.
\newblock {\em arXiv:2110.13758}, 2021.

\bibitem{Kudryavtseva2021b}
E.~Kudryavtseva and N.~Martynchuk.
\newblock Existence of a smooth {H}amiltonian circle action near parabolic
  orbits and cuspidal tori.
\newblock {\em Regular and Chaotic Dynamics}, 26(6):732--741, 2021.

\bibitem{Kudryavtseva2022}
E.~Kudryavtseva and A.~Oshemkov.
\newblock Structurally stable non-degenerate singularities of integrable
  systems.
\newblock {\em Russian Journal of Mathematical Physics}, 29:58--75, 2022.

\bibitem{Floch2019}
Y.~Le~Floch and {\'A}.~Pelayo.
\newblock Symplectic geometry and spectral properties of classical and quantum
  coupled angular momenta.
\newblock {\em J. Nonlinear Sci.}, 29:655--708, 2019.

\bibitem{Lukina2008}
O.~V. Lukina, F.~Takens, and H.~W. Broer.
\newblock Global properties of integrable {H}amiltonian systems.
\newblock {\em Regular and Chaotic Dynamics}, 13(6):602--644, 2008.

\bibitem{Miranda2004}
E.~Miranda and N.~T. Zung.
\newblock Equivariant normal form for nondegenerate singular orbits of
  integrable {H}amiltonian systems.
\newblock {\em Annales Scientifiques de l’École Normale Supérieure},
  37(6):819 -- 839, 2004.

\bibitem{NIST2010}
F.~W.~J. Olver, D.~W. Lozier, R.~F. Boisvert, and C.~W. Clark, editors.
\newblock {\em N{IST} handbook of mathematical functions}.
\newblock U.S. Department of Commerce, National Institute of Standards and
  Technology, Washington, DC; Cambridge University Press, Cambridge, 2010.
\newblock With 1 CD-ROM (Windows, Macintosh and UNIX).

\bibitem{Pelayo2009}
{\'A}.~Pelayo and S.~V{\~u}~Ng{\d o}c.
\newblock Semitoric integrable systems on symplectic 4-manifolds.
\newblock {\em Invent. math.}, 177:571--597, 2009.

\bibitem{Pelayo2011}
{\'A}.~Pelayo and S.~V{\~u}~Ng{\d o}c.
\newblock {Constructing integrable systems of semitoric type}.
\newblock {\em Acta Mathematica}, 206(1):93--125, 2011.

\bibitem{Vu-Ngoc2006}
S.~{V{\~u} Ng{\d o}c}.
\newblock {\em Syst{\`e}mes int{\`e}grables semi-classiques: du local au
  global}.
\newblock Panoramas et Synth{\`e}ses SMF, 2006.

\bibitem{Vu-Ngoc2011}
S.~{V{\~u} Ng{\d o}c}.
\newblock Symplectic inverse spectral theory for pseudodifferential operators.
\newblock In {\em Geometric aspects of analysis and mechanics}, volume 292 of
  {\em Progr. Math.}, pages 353--372. Birkh{\"a}user/Springer, New York, 2011.

\bibitem{Vu-Ngoc2013}
S.~{V{\~u} Ng{\d o}c} and C.~Wacheux.
\newblock Smooth normal forms for integrable {H}amiltonian systems near a
  focus-focus singularity.
\newblock {\em Acta Math Vietnam.}, 38(2):107--122, 2013.

\bibitem{vanderMeer1985}
J.~C. van~der Meer.
\newblock {\em The {H}amiltonian {H}opf {B}ifurcation}.
\newblock Springer, Springer-Verlag Berlin Heidelberg, 1985.

\bibitem{Vu-Ngoc2003}
S.~V{\~u}~Ng{\d o}c.
\newblock On semi-global invariants for focus-focus singularities.
\newblock {\em Topology}, 42:365--380, 2003.

\bibitem{Waalkens2004}
H.~Waalkens, H.~R. Dullin, and P.~H. Richter.
\newblock The problem of two fixed centers: bifurcations, actions, monodromy.
\newblock {\em Physica D: Nonlinear Phenomena}, 196(3-4):265--310, 2004.

\bibitem{Zung1996_2}
N.~T. Zung.
\newblock Symplectic topology of integrable {H}amiltonian systems, {I}:
  {Arnold-Liouville} with singularities.
\newblock {\em Compositio Mathematica}, 101(2):179--215, 1996.

\end{thebibliography}
\end{document}